\newcommand{\be}{\begin{equation}}
\newcommand{\ee}{\end{equation}}
\newcommand{\beq}{\begin{eqnarray}}
\newcommand{\eeq}{\end{eqnarray}}
\newtheorem{thm}{Theorem}[section]
\newtheorem{lma}{Lemma}[section]
\newtheorem{cor}{Corollary}[section]
\theoremstyle{remark}
\newtheorem{rem}{Remark}[section]
\numberwithin{equation}{section}
\def\be{\begin{equation}}
\def\ee{\end{equation}}
\def\bee{\begin{equation*}}
\def\eee{\end{equation*}}
\def\ol{\overline}
\def\lf{\left}
\def\ri{\right}
\def\by{\mathbf{y}}
\def\bx{\mathbf{x}}
\def\wn{\wt\nabla}
\def\cI{\mathcal{I}}
\def\bn{\mathbf{n}}
\def\cK{\mathcal{K}}
\def\wt{\widetilde}
\def\la{\langle}
\def\ra{\rangle}
\def\p{\partial}
\def\ol{\overline}
\def\e{\varepsilon}
\def\a{{\alpha}}
\def\b{{\beta}}
\def\R{\mathbb{R}}
\def\mS{\mathbb{S}}
\def\ve{\varepsilon}
\def\gsch{g_{\mathrm{Sch}}}
\def\Div{\mathrm{Div}}
\begin{document}

\title{Spacelike CMC surfaces near null infinity of the Schwarzschild spacetime}

\author{Luen-Fai Tam}
\address{The Institute of Mathematical Sciences and the Department of Mathematics \\ The Chinese University of Hong Kong \\ Shatin, Hong Kong, China}
\email{lftam@math.cuhk.edu.hk}

 \renewcommand{\subjclassname}{
  \textup{2010} Mathematics Subject Classification}
\subjclass[2010]{Primary 53C44, Secondary 83C30}

\thanks{Research partially supported by Hong Kong RGC General Research Fund \#CUHK
14300420}

\keywords{Schwarzschild spacetime; spacelike constant mean curvature surface;  null-infinity}
\date{\today}
\maketitle
\markboth{Luen-Fai Tam }{Spacelike   constant mean curvature surfaces near null infinity}

\begin{abstract}
Motivated by a result of Treibergs, given a smooth function $f(\by)$ on the standard sphere $\mS^2$, $\by\in \mS^2$,  and any positive constant $H_0$, we construct a spacelike  surface with constant mean curvature $H_0$ in the Schwarzschild spacetime, which is the graph of a function $u(\by, r)$ defined   on $r>r_0$ for some $r_0>0$ in the standard coordinates exterior to the blackhole. Moreover, $u$ has the following asymptotic behavior:
 $$
\lf|u(\by, r)-r_*-\lf(f(\by)+r^{-1}\phi(\by)+\frac12 r^{-2}\psi(\by)\ri)\ri|\le Cr^{-3}
$$
for some $C>0$, where $r_*=r+2m\log(\frac r{2m}-1)$.
 Here $\phi, \psi$ are functions on $\mS^2$ given by $\phi= \frac12\lf(H_0^{-2}+|\nabla_{\mS^2} f|^2_{\mS^2}\ri)$ and
   $\psi=\frac12\lf(H_0^{-2} \Delta_{\mS^2} f+\la \nabla_{\mS^2}|\nabla_{\mS^2}f|^2_{\mS^2},\nabla_{\mS^2} f\ra _{\mS^2}\ri)$.
  In particular, the surface intersects the future null infinity with the cut given by the function $f$. In addition, we prove that the function $u-r_*$ is uniformly Lipschitz near the future null infinity.
\end{abstract}

\section{Introduction}\label{s-intro}

In \cite{Treibergs}, Treibergs proved the following:
Given a $C^2$ function $f(\by)$ on the standard sphere $\mS^{n-1}$ and a constant $H_0>0$ there exists an entire spacelike surface in the Minkowski space $\R^{n,1}$ with constant mean curvature $H_0$ which is the entire graph of a function $u$ such that
\bee
\lim_{r\to\infty} (u(\by, r)-r)=f(\by).
\eee
Here $(\by,r)\in \mS^{n-1}\times (0,\infty)$ is the spherical coordinates of $\R^n$. The result  implies that the surface will intersect the future null infinity at the cut given by $(\by, f(\by))$.

Motivated by this result,  we want to study what one may obtain for Schwarzschild spacetime.
Recall the standard Schwarzschild metric defined on $r>2m>0$ is:

\be\label{e-Sch-metric}
g_\mathrm{Sch}=-\lf(1-\frac{2m}r\ri)dt^2+\lf(1-\frac{2m}r\ri)^{-1}dr^2+r^2 \sigma,
\ee
$-\infty<t<\infty$, where $r=\sum_{i=1}^3(x^i)^2$ with $(x^1,x^2,x^3)\in \R^3$ and $\sigma$ is the standard metric of the unit sphere $\mathbb{S}^2$.   The future null infinity $\cI^+$ of the Schwarzschild spacetime is of the form $ \mS^2\times\R$ with $\mS^2$ being the standard sphere, see \S\ref{s-prelim} for more details. Given a cut $\mathcal{C}$ in $\cI^+$ represented as $(\by, f(\by))$, $\by\in \mS^2$ and $f$ is a function of $\by$, we want to construct a spacelike   constant   mean curvature (CMC) surface with positive constant mean curvature which intersects $\cI^+$ at this cut.  To state our result, let

\be\label{e-rstar}
r_*=r+2m\log(\frac r{2m}-1).
\ee
We obtain the following:

\begin{thm}\label{t-main}
Let $f$ be a smooth function on $\mS^2$. For any constant $H_0>0$, there exists $u(\by, r)$ defined for $\by\in \mS^2$, $r>r_0$,  for some $r_0>2m$ such that the graph of $u$ in the Schwarzschild spacetime is a spacelike hypersurface of constant mean curvature $H_0>0$ with boundary value at the future null infinity given by $f$. More precisely, $u$ satisfies:
\bee
\lim_{r\to\infty}(u( \mathbf{y},r)-r_*)=f(\mathbf{y}),
\eee
for all $\by\in \mS^2$. In fact, there exists $C>0$ such that
$$
\lf|u(\by, r)-r_*-\lf(f(\by)+r^{-1}\phi(\by)+\frac12 r^{-2}\psi(\by)\ri)\ri|\le Cr^{-3}
$$
for all $\by\in\mS^2$, $r>r_0$, where
\bee
\left\{
  \begin{array}{ll}
   \phi= \frac12\lf(H_0^{-2}+|\wn f|^2_{\mS^2}\ri); \\
   \psi=\frac12\lf(H_0^{-2}\wt\Delta f+\la \wn|\wn f|^2_{\mS^2},\wn f\ra_{\mS^2}\ri).
  \end{array}
\right.
 \eee
Here $\wn$ and $\wt\Delta$ are  the covariant derivative and Laplacian of  the standard $\mS^2$ respectively. The inner product is taken with respect to the standard metric of $\mS^2$.

\end{thm}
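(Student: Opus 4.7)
The plan is to (i) rewrite the CMC equation in the shifted variable $v := u - r_*$; (ii) construct an approximate solution via formal expansion in powers of $r^{-1}$; (iii) produce exact solutions on a family of finite annuli using barriers and the standard regularity theory for spacelike CMC graphs; and (iv) pass to the limit as the outer radius tends to infinity.

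The substitution $v = u - r_*$ is natural because $t = r_*$ is itself an outgoing null hypersurface of the Schwarzschild metric. Writing $F = 1 - \tfrac{2m}{r}$, a direct calculation of the induced metric gives
\bee
\tilde g_{rr} = -v_r(Fv_r+2),\quad \tilde g_{ra} = -F(v_r+F^{-1})v_a,\quad \tilde g_{ab} = r^2\sigma_{ab} - Fv_av_b,
\eee
so that the spacelike condition becomes $v_r<0$ with $|v_r|<2F^{-1}$. The equation $H=H_0$ then takes the form of a quasilinear PDE for $v$ on $\mS^2\times(r_0,\infty)$ whose natural scale forces $v_r = O(r^{-2})$. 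Plugging the ansatz $v = f + r^{-1}\phi + \tfrac12 r^{-2}\psi + O(r^{-3})$ into this PDE and matching the $r^0$ and $r^{-1}$ coefficients determines $\phi$ and $\psi$ as in the statement of the theorem, and in particular yields the strict positivity $\phi \ge \tfrac12 H_0^{-2}>0$.

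With $v_0 := f + r^{-1}\phi + \tfrac12 r^{-2}\psi$ in hand, the next step is to verify that $v_0$ is an approximate CMC solution, that is, $H(v_0)-H_0$ decays fast enough in $r$ so that one can build sub- and supersolutions of the form $v_0\pm Cr^{-3}$ for a sufficiently large constant $C$ and a sufficiently large $r_0$. I would then solve the Dirichlet problem for the CMC equation on each annulus $\Omega_R = \mS^2\times[r_0,R]$ with boundary data $v = v_0$ on both components of $\p\Omega_R$. The barriers confine the solution $v_R$ to $|v_R - v_0|\le Cr^{-3}$ uniformly in $R$, which in turn forces $\tilde g$ to remain uniformly Riemannian. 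Interior $C^{2,\alpha}$ estimates for spacelike CMC graphs then provide bounds independent of $R$, and an Arzel\`a--Ascoli diagonal argument extracts a subsequential limit $v$ defined on all of $\mS^2\times(r_0,\infty)$ satisfying the same pointwise bound $|v-v_0|\le Cr^{-3}$. This is precisely the asymptotic expansion claimed in the theorem, and it implies both the boundary value $\lim_{r\to\infty}(u-r_*)=f$ and the uniform Lipschitz control on $u-r_*$ near $\cI^+$.

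The principal obstacle is the uniform spacelike estimate: the induced-metric coefficient $\tilde g_{rr}$ degenerates as $|v_r|\to 2F^{-1}$, and preserving strict positivity of $\tilde g$ throughout the annulus solves and in the $R\to\infty$ limit requires quantitative control. This is where the explicit positivity $\phi \ge \tfrac12 H_0^{-2}>0$ is essential: it forces the leading behavior $v_r \sim -\phi\, r^{-2}$ to be strictly negative and well away from the degenerate value, playing a role analogous to the strict convexity that drives Treibergs' Minkowski construction. Once this is in place, the tangential Lipschitz estimate on $v$ follows from differentiating the equation along $\mS^2$ and repeating the same barrier-plus-interior-estimate argument for $\wn v$.
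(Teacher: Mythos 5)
Your overall architecture (approximate solution $v_0$ with the same $\phi,\psi$, cubic barriers $v_0\pm Cr^{-3}$, Dirichlet problems on annuli, and a diagonal limit) matches the paper's construction, including the choice of a cubic correction in $s=r^{-1}$ to generate sub/supersolutions. The gap, however, is precisely at the step you tuck into the phrase ``interior $C^{2,\alpha}$ estimates for spacelike CMC graphs then provide bounds independent of $R$,'' and your attempted repair does not close it.

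The $C^0$ confinement $|v_R-v_0|\le Cr^{-3}$ coming from the barriers controls only the value of $v_R$, not its gradient. It is not true that $\phi\ge\tfrac12H_0^{-2}>0$ forces $v_{R,r}\sim -\phi\,r^{-2}$: an oscillation of size $r^{-3}$ in $v_R-v_0$ can carry a derivative of any size, so the approximate solution's gradient tells you nothing about $v_{R,r}$ without a separate argument. Because the CMC graph equation degenerates as the graph approaches null directions, the ellipticity constants in any Schauder-type interior estimate depend on a gradient bound (equivalently, a tilt/gauge bound), which you must therefore establish \emph{first}, uniformly in $R$. In Treibergs' Minkowski work this is handled by a convexity/maximum-principle mechanism particular to flat space; it has no direct analogue here. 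The paper obtains the required uniform tilt bound from Bartnik's a priori gradient estimate (Theorem 3.1(iii) of his 1984 paper), and the bulk of the technical work (all of Section 3 of the paper, plus the foliation $P(\by,s,\tau)$ and its lapse, second fundamental form, $\nabla T$, $\nabla\nabla T$, and boundary mean-curvature-vector estimates in the unphysical coordinates) is precisely what is needed to verify Bartnik's hypotheses on a background that degenerates as $s\to0$. None of this appears in your outline, so the passage from barriers to uniform interior regularity is unjustified. A secondary point: the Lipschitz control on $u-r_*$ in the paper is likewise a consequence of the uniform tilt bound (Theorem 5.1), not of ``differentiating the equation along $\mS^2$ and repeating the barrier argument''; the latter would again need the gradient estimate you have not supplied.
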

We should emphasis that unlike \cite{Treibergs}, we can only construct a surface which is   defined near the future null infinity.

In addition to the results on spacelike CMC surfaces in the Minkowski space by Treibergs \cite{Treibergs}, there is a well-known result by Bartnik \cite{Bartnik1984} which states that there exists a complete spacelike maximal hypersurface asymptotic to the spatial infinity  in an asymptotically flat spacetime satisfying  a uniform {\it interior condition} (see \cite[p.169]{Bartnik1984} for the definition). In \cite{AnderssonIriondo1999}, Andersson and Iriondo proved the existence of a complete spacelike CMC surface with positive constant mean curvature on an {\it asymptotically Schwarzschild spacetime} (see \cite[Definition 2.1]{AnderssonIriondo1999}) which satisfies a uniform {\it future interior condition} (see \cite[Definition 4.1]{AnderssonIriondo1999}). The constructed surface intersects the future null infinity at $(\by, f(\by))$ with $f(\by)$=constant. In \cite{BartnikChruscielOMurchada}, Bartnik, Chru\'sciel and \'O Murchada studied  complete spacelike surfaces which are maximal outside a spatially compact set   on certain asymptotically flat spacetimes. Recently, spacelike graph of a function which is asymptotically zero in the Minkowski spacetime $\R^{n,1}$ with prescribed mean curvature outside a compact set in $\R^n$ has been constructed in \cite{BartoloCaponioPomponio} by Bartolo, Caponio and Pomponio. On the other hand, spacelike CMC surfaces in the Schwarzschild spacetime have been studied by many people. In particular, In \cite{LL,LL2} K-W Lee and Y-I Lee gave a complete description of spacelike spherical symmetric constant mean curvature    surfaces in the Kruskal extension of Schwarzschild spacetime. See also the references therein.

In Theorem \ref{t-main}, the constructed surface is asymptotically to a cut in the null infinity. Some higher order rate of approximation is also obtained. The main idea is to construct a good foliation near the future null infinity as in \cite{AnderssonIriondo1999} with good estimates so that one can obtain estimates of the so-called tilt factor of a spacelike surface, using a result in  Bartnik \cite{Bartnik1984}. We also need to construct suitable barrier. Our   construction is to use  the results by Li, Shi and the author in \cite{LST}. Without further assumptions on $f$ one might not be able to construct a better barrier to obtain  a better approximation. See   Remark \ref{r-beta} for details.

A natural question is on the regularity   of the function $u-r_*$. In \cite{Stumbles}, Stumbles constructed spacelike CMC surfaces in the Minkowski spacetime (or nearby spacetime) so that the surfaces are $C^3$ near and up to the future null infinity, provided the cut is represented by $(\by, f(\by))$ with $f$ being close to a constant. One may not expect a $C^4$ regularity by the results in \cite{LST}.
In our case,  the foliation  mentioned above in our construction is given by a time function. From the construction, the so-called tilt factor (see the definition in \S\ref{s-est}) of the constructed surface with respect to this time function is uniformly bounded. Using this fact,  we have the following:
\begin{cor}\label{c-intro}
The function $Q(\by,s)=r_*-u(\by,r)$ with $r=s^{-1}$ is uniformly Lipschitz on $\mS^2\times(0,s_0)$ for some $s_0>0$.
\end{cor}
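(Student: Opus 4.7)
The plan is to translate the uniform bound on the tilt factor, provided by the construction in \S\ref{s-est}, into a gradient bound on $v := u - r_*$, from which the Lipschitz property of $Q$ in the coordinates $(s,\by)=(1/r,\by)$ is immediate.

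First observe that since $r = s^{-1}$, the chain rule gives $\p_s = -r^2\p_r$ on functions of $(\by,r)$, so
\begin{equation*}
\p_s Q = r^2\,\p_r v, \qquad \wn_{\mS^2} Q = -\wn v.
\end{equation*}
Thus Corollary \ref{c-intro} reduces to the existence of $C > 0$ with
\begin{equation*}
r^2|\p_r v| + |\wn v|_{\mS^2} \le C \qquad \text{for all } r > r_0,\ \by \in \mS^2.
\end{equation*}
Note that the bare spacelike condition $(1-2m/r)^2 u_r^2 + r^{-2}(1-2m/r)|\wn u|_{\mS^2}^2 < 1$, when rewritten in terms of $v$, only yields the weaker $(1-2m/r)|v_r|\le 2$ and $|\wn v|_{\mS^2}\le r(1-2m/r)^{-1/2}$, so extra input beyond spacelikeness is needed.

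That input is the tilt bound. Let $\nu$ be the future unit timelike normal to the graph of $u$ and let $T$ be the future unit timelike normal to the leaves of the foliation $\tau = \mathrm{const}$ from the construction; by hypothesis $\gamma := -g_{\mathrm{Sch}}(\nu,T)$ is uniformly bounded. Because the foliation is built so that its leaves asymptote to the outgoing null cones $\{t - r_* = \mathrm{const}\}$ of future null infinity, $T$ admits an explicit expansion in the Schwarzschild frame $\{\p_t,\p_r,\p_a\}$ whose leading part couples $\p_t$ and $\p_r$ through the factor $(1-2m/r)^{-1}$. Computing $-g_{\mathrm{Sch}}(\nu,T)$ on the graph of $u$ using the explicit formula for $\nu$ in terms of $(u_r,\wn u)$, and substituting $u_r = v_r + (1-2m/r)^{-1}$ and $\wn u = \wn v$, the null contributions cancel against the $(1-2m/r)^{-1}\p_r$-component of $T$, so that the remaining expression for $\gamma$ takes the schematic form
\begin{equation*}
\gamma^2 \sim 1 + r^2(v_r)^2 + |\wn v|_{\mS^2}^2 + \text{(lower order)}.
\end{equation*}
The uniform bound on $\gamma$ then forces $r^2|v_r|$ and $|\wn v|_{\mS^2}$ to be uniformly bounded, which is exactly the gradient estimate required, and Corollary \ref{c-intro} follows.

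The main obstacle I anticipate is keeping the cancellation between the null leading part of the tilt and the $(1-2m/r)^{-1}$-term of $u_r$ under careful control: without it one only recovers the weak bounds noted above, while with it the sharp $r^2$-weight on $\p_r v$ appears. This is precisely why the foliation in \S\ref{s-est} must be constructed so that its leaves are asymptotically aligned with the outgoing null direction, rather than being an arbitrary CMC foliation.
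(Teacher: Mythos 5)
Your reduction to a gradient bound on $v=u-r_*$ is the right target, and you correctly identify that the tilt bound is the key extra input beyond spacelikeness. However, the central claim of your argument, that on the graph
\[
\gamma^2 \sim 1 + r^2 v_r^2 + |\wn v|_{\mS^2}^2 + \text{(lower order)},
\]
is false, and the failure is not merely one of bookkeeping. A quick sanity check: take $v \equiv \text{const}$ (so $v_r=0$, $\wn v=0$, i.e. $u=r_*+c$). Your formula gives $\gamma^2 \sim 1$, i.e. a perfectly aligned normal, but that surface is the outgoing null cone $t-r_*=\text{const}$, on which $1-h|Du|^2=0$ and the unit normal $\bn$ (hence the tilt factor) degenerates. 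So the dependence of $\gamma$ on $(v_r,\wn v)$ cannot be an additive sum of squares; the degeneracy at $v_r=\wn v=0$ must show up as a \emph{denominator}, not a constant.

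Indeed, unwinding $\gamma=-G(T,\bn)=\a\,(-G(\nabla F,\nabla F))^{-1/2}\bigl(-G(\nabla\tau,\nabla F)\bigr)$ for $F=v+Q$ (with $Q=-v$, $Q_s=r^2 v_r$) shows schematically
\[
\gamma \;\sim\; \frac{s^2 A}{\sqrt{B}},\qquad
A \sim -r^2 v_r + P_s + \langle\wn f,\wn v\rangle,\qquad
B \sim -2r^2 v_r - r^2 v_r^2 - |\wn v|^2 \;\sim\; s^2\bigl(-G(\nabla F,\nabla F)\bigr),
\]
so the tilt factor is a ratio of a \emph{linear} form $A$ in the gradient to the square root of the \emph{quadratic} form $B$ that measures spacelikeness. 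The bound $\gamma\le C$ therefore yields only the one-sided inequality $A\lesssim \sqrt B/s^2$, which controls $-r^2 v_r$ from above in terms of $|\wn v|$ but gives nothing directly on $|\wn v|$ or on the other sign of $v_r$. The paper's proof (Theorem~\ref{t-Lip}) supplies the missing pieces: first it uses the tilt bound together with Lemma~\ref{l-T123} and the explicit comparison $-G(T,\wt T)\sim s^{-1}$ (Lemma~\ref{l-TT}) to force $-G(\nabla F,\nabla F)\le Cs^2$, i.e. $B\lesssim s^4$; second, it pairs the resulting linear inequality \eqref{e-Q-1} with the quadratic inequality \eqref{e-Q-2} coming from $G(\nabla F,\nabla F)\le 0$, and only the specific convex combination of the two closes the estimate. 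Your proposal omits both of these steps — the degenerate lower bound on $-G(\wt T,\bn)$ and the explicit use of the spacelike inequality as a second, independent constraint — and the hoped-for "cancellation" that would make $\gamma^2$ into a coercive quadratic form in $(r^2 v_r,\wn v)$ does not occur.
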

 This is a corollary of a more general result. See Theorem \ref{t-Lip} for  details. This theorem might also be applied  to the spacelike CMC surface in \cite[Theorem 4.1]{AnderssonIriondo1999}.

 The organization of the paper is as follows. In \S\ref{s-prelim}, we will recall the structure of future null infinity $\mathcal{I}^+$ in the Schwarzschild spacetime and will construct a suitable foliation near $\mathcal{I}^+$. In \S\ref{s-est}, we will give detailed estimation on the foliation which will be used later. In \S\ref{s-cmc surfaces} we will prove Theorem \ref{t-main}. In \S\ref{s-Lip} we will discuss a general  Lipschitzian regularity property of spacelike surfaces near $\mathcal{I}^+$ and prove Corollary \ref{c-intro}.

\section{Future null infinity and a foliation}\label{s-prelim}

\subsection{Future null infinity}\label{ss-metric}
Let us   recall the future null infinity of the Schwarzschild spacetime. We always assume that $\frac{\p}{\p t}$ is future pointing. Consider the retarded null coordinate

\be\label{e-uv}
     v=t-r_*,
\ee
where $r_*$ is given by \eqref{e-rstar}. Let $s=r^{-1}$, then

\be\label{e-metric-null}
\begin{split}
g=g_{\mathrm{Sch}}=&-(1-2ms)d {   v}^2+2s^{-2}  d   v ds+s^{-2} \sigma\\
=&s^{-2}(-s^2(1-2ms)d    v^2+2d   v  ds+\sigma)\\
=&:s^{-2}\bar g,
\end{split}
\ee
with $ 0<s<\frac1{2m}$, $-\infty<v<\infty$.
Here the unphysical metric $\bar g$ is
the product metric:
\be\label{e-unphysical-1}
\ol g=(\sigma_{AB})\oplus\left(
      \begin{array}{cc}
        0 & 1 \\
        1 &-s^2(1-2ms) \\
      \end{array}
    \right),
\ee
where $(\sigma_{AB})$ is the standard metric for $\mS^2$ in local coordinates $y^1, y^2$. So
$y^1,y^2, s,  v$ are    coordinates of the spacetime. We also write $(y^1,y^2, s,   v)$ as $(y^1,y^2,y^3,y^4)$.
$\ol g$ is
a smooth Lorentz metric defined on $\by\in \mS^2$, $s\in [0,1/2m)$, $v\in \R$.     The future null infinity $\mathcal{I}^+$ is identified with the boundary   $s=0$, which is a null hypersurface. For later reference,
\be\label{e-unphysical-2}
(\ol g^{ab})=(\ol g)^{-1}=(\sigma^{AB})\oplus\left(
      \begin{array}{cc}
         s^2(1-2ms) & 1 \\
        1 &0 \\
      \end{array}
    \right).
\ee
where $(\sigma^{AB})$ is the inverse of $(\sigma_{AB})$. Hence for the physical metric, $g^{ab}=s^2\ol g^{ab}$.

\vskip .2cm

{\it Convention}: In the following $  a, b, c\dots$ run from 1 to 4; $i, j, k,\dots$ run from 1 to 3 and   $A, B, C, \dots$ run from 1 to 2. Einstein summation convention will be used.

\vskip .2cm

\subsection{Foliation}\label{ss-foliation}
Given a smooth function $f(\by)$ on $\mathbb{S}^2$. Consider the cut   $\mathcal{C}$ given by $(\by,   f(\by)), \by\in \mathbb{S}^2$ in $\cI^+$. We want to extend it to a spacelike CMC surface in the Schwarzschild spacetime. As in \cite{AnderssonIriondo1999},  we need to construct a suitable foliation near $\cI^+$ related to    $f$.
For $\tau>0$, let

\be\label{e-foliation-1}
 P(  \by, s, \tau)=f(\by)+s\phi(\tau,\by)+\frac1{2!}s^2\psi(\tau,\by),
\ee
where $\phi=P_s, \psi=P_{ss}$ at $s=0$ are smooth functions in $\tau, \by$, given by

\be\label{e-phi-psi}
\left\{
  \begin{array}{ll}
   \phi=-\frac12\lf(\tau^2+|\wn f|^2_{\mS^2}\ri); \\
   \psi=\frac12\lf(\tau^2\wt\Delta f+\la \wn|\wn f|^2_{\mS^2},\wn f\ra_{\mS^2}\ri).
  \end{array}
\right.
 \ee
  The choice of $\phi, \psi$ is motivated by the result in \cite[Theorem 3.1]{LST}, so that if  $\Sigma_\tau$ is the surface given by $(\by, s)\to (\by, s, -P)$ in the $\by, s, v$ coordinates, then $\Sigma_\tau$ is spacelike near $s=0$ and   its mean curvature $H$ is such that at $s=0$, $H=\tau^{-1}$, and $\p_sH=0$.

Direct computations give:
\be\label{e-dP}
\left\{
  \begin{array}{ll}
    P_\tau=-\tau s +\frac12\tau s^2 \wt\Delta f=-\tau s\lf(1-\frac12 s\wt\Delta f\ri);\\
P_s=\phi+s\psi;\\
P_A=f_A+s\phi_A+\frac12 s^2\psi_A, A=1, 2. \\
  \end{array}
\right.
\ee
Here for a smooth function $\theta$ in $\by, s, \tau$, the partial derivative of $\theta$ with respect to $s$ is denoted by $\theta_s$ etc.

Let $0<\tau_1<\tau_2<\infty$ be fixed. Let
$$
M=\{\by\in \mathbb{S}^2, s\in (0,\frac1{2m}),\tau\in (\tau_1,\tau_2)\}=\mS^2\times(0,s_0)\times(\tau_1,\tau_2).
$$
Consider the map $\Phi$ from $M$ to the Schwarzschild spacetime in $\by, s, v$ coordinates defined by:
\be\label{e-Phi}
\Phi(\by, s, \tau)=(\by, s, v(\by,s, \tau))
\ee
with  $v(\by,s,\tau)=-P(\by, s,\tau)$.
\begin{lma}\label{l-parametriztaion} There is $\frac 1{2m}>s_0>0$ depending only on $\tau_1, \tau_2, f$ such that $\Phi$ is a diffeomorphism onto its image. Hence $\Phi(M)$ is parametrized by $\by, s, \tau$. Moreover,
\be\label{e-dtau}
\frac{\p \tau}{\p v}=-\frac1{P_\tau} ; \frac{\p \tau}{\p s}=-\frac{P_s}{P_\tau} ; \frac{\p \tau}{\p y^A}=-\frac{P_A}{P_\tau}, A=1,2.
\ee
\end{lma}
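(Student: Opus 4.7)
The plan is to prove the lemma in three small steps: verify that $\Phi$ is a local diffeomorphism via its Jacobian, upgrade local to global by showing injectivity on $M$, and then read off the derivative formulas \eqref{e-dtau} by implicit differentiation. In the coordinates $(y^1,y^2,s,\tau)\mapsto(y^1,y^2,s,v)$, the differential $D\Phi$ is lower triangular with diagonal $(1,1,1,v_\tau)$ and $v_\tau=-P_\tau$. From \eqref{e-dP},
$$
P_\tau=-\tau s\lf(1-\tfrac12 s\wt\Delta f\ri).
$$
Choosing $s_0\in(0,1/(2m))$ so small that $s_0\sup_{\mS^2}|\wt\Delta f|<1$ keeps the factor $(1-\tfrac12 s\wt\Delta f)$ bounded below by a positive constant on $M$; combined with $\tau\ge \tau_1>0$ and $s>0$, this forces $P_\tau<0$ strictly, so $\det D\Phi=-P_\tau>0$ on $M$ and $\Phi$ is a smooth local diffeomorphism there.

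For injectivity, suppose $\Phi(\by,s,\tau')=\Phi(\by',s',\tau'')$. The first three slots give $\by=\by'$ and $s=s'$ immediately, and the fourth yields $P(\by,s,\tau')=P(\by,s,\tau'')$. By the sign computation above, $\tau\mapsto P(\by,s,\tau)$ is strictly monotone on $(\tau_1,\tau_2)$ for each fixed $(\by,s)\in\mS^2\times(0,s_0)$, so $\tau'=\tau''$. Hence $\Phi$ is a smooth bijection onto its image with everywhere invertible differential, i.e., a diffeomorphism onto $\Phi(M)$.

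Finally, the formulas \eqref{e-dtau} follow from implicit differentiation of the defining identity
$$
v+P(\by,s,\tau(\by,s,v))=0
$$
with respect to $v$, $s$, and $y^A$ in turn, holding the other physical coordinates fixed; this yields $1+P_\tau\tau_v=0$, $P_s+P_\tau\tau_s=0$ and $P_A+P_\tau\tau_A=0$, which give \eqref{e-dtau} after dividing by $P_\tau\neq 0$. The only subtlety, which is what dictates the choice of $s_0$, is the degeneracy $P_\tau\to 0$ as $s\to 0$: we must stay on the open set $\{s>0\}$ and shrink $s_0$ just enough to keep the $O(s)$ correction $\tfrac12 s\wt\Delta f$ away from $1$. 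This is a minor obstacle, not a conceptual one, so the argument is essentially routine calculus once the formula for $P_\tau$ is in hand.
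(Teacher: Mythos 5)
Your proof is correct and follows exactly the route the paper gestures at: observe that $P_\tau=-\tau s(1-\tfrac12 s\wt\Delta f)<0$ once $s_0$ is small enough (so the Jacobian of $\Phi$ is invertible and $\tau\mapsto P$ is strictly monotone, giving injectivity), then obtain \eqref{e-dtau} by implicit differentiation of $v+P=0$. The paper leaves these steps as "easy to see" and "some computations," so you have simply written out the intended argument in full.
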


\begin{proof}
It is easy to see that if $s_0>0$ is small enough, then $P_\tau<0$. From this and some computations, it is easy to see the lemma is true.

\end{proof}
Let $s_0>0$ be as in the lemma, then
\be
\Phi(M)=\{(\by, s, v)|\ P(\by, s, \tau_2)<v<P(\by, s,\tau_1)\}.
\ee
By the lemma, we can see that $\tau$ is a smooth function on $\Phi(M)$.

 Given $\tau\in (\tau_1,\tau_2)$, let
\bee
\Sigma_\tau=\{ v=-P(\by, s, \tau)\}
\eee
which is a level surface of $\tau$. For fixed $\tau$, let $F(\by, s,v)=v+P(\by,s,\tau)$. To simplify notation, define
\be\label{e-L}
\begin{split}
L=:&-\lf(2P_s+s^2(1-2ms)P_s^2+|\wn P|^2\ri)=-\ol g(\ol\nabla F,\ol\nabla F)\\
\end{split}
\ee
where $\ol\nabla$ is the derivative with respect to $\ol g$.
Here and later, we simply write $|\wn P|$ instead of $||\wn P||_{\mS^2}$ if this does not cause confusion.

\begin{lma}\label{l-foliation-1}
    There is $\frac1{2m}>s_0>0$ depending only on  $\tau_1$,   $\tau_2$ and $ f$ such that $\Sigma_\tau$ is spacelike in $(0,s_0)$ for $\tau\in (\tau_1,\tau_2)$. In fact,
$$
\nabla \tau=-P_\tau^{-1} \lf(g^{va} +g^{ia}P_i \ri)\p_{y^a},
$$
 and
$$
g(\nabla\tau,\nabla\tau)=-s^2P_\tau^{-2}L.
$$
  Moreover, for all $\tau\in (\tau_1,\tau_2)$, $\Sigma_\tau$ is a smooth up to $\cI^+$ in the sense that $P$ is smooth up to $s=0$, which intersects $\cI^+$ at the cut $ \mathcal{C}$ given by $\{(\by, f(\by))| \ \ \by\in \mathbb{S}^2\}$.

\end{lma}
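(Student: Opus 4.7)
The plan is to deduce all four assertions directly from Lemma \ref{l-parametriztaion}, which provides $\p_v\tau=-1/P_\tau$ and $\p_i\tau=-P_i/P_\tau$, combined with the explicit form of the unphysical metric in \eqref{e-unphysical-1}--\eqref{e-unphysical-2}. First, assembling $\nabla\tau = g^{ab}(\p_a\tau)\p_{y^b}$ and substituting these expressions immediately produces the claimed formula $\nabla\tau = -P_\tau^{-1}(g^{va}+g^{ia}P_i)\p_{y^a}$.

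For the length of $\nabla\tau$, I would introduce (for $\tau$ fixed) the level-set defining function $F(\by,s,v) = v + P(\by,s,\tau)$, whose partial derivatives are $\p_v F = 1$ and $\p_i F = P_i$, so that $\p_a\tau = -\p_a F/P_\tau$. Then, using $g^{ab}=s^2\bar g^{ab}$,
\[
g(\nabla\tau,\nabla\tau) \;=\; P_\tau^{-2}\,g^{ab}\p_a F\,\p_b F \;=\; s^2 P_\tau^{-2}\,\bar g(\bar\nabla F,\bar\nabla F) \;=\; -s^2 P_\tau^{-2} L,
\]
the last step being exactly the definition \eqref{e-L} once one expands through the block form \eqref{e-unphysical-2}; the relevant inverse entries are $\bar g^{vv}=0$, $\bar g^{vs}=1$, $\bar g^{ss}=s^2(1-2ms)$, $\bar g^{AB}=\sigma^{AB}$, which give $\bar g^{ab}\p_a F\,\p_b F = s^2(1-2ms)P_s^2+2P_s+|\wn P|^2 = -L$.

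It then remains to check $L>0$ on a uniform neighborhood of $s=0$, which yields the spacelike conclusion. From \eqref{e-foliation-1}--\eqref{e-phi-psi}, at $s=0$ one has $P_s=\phi=-\tfrac12(\tau^2+|\wn f|^2)$ and $|\wn P|^2=|\wn f|^2$, so
\[
L\big|_{s=0} \;=\; -\bigl(2\phi+|\wn f|^2\bigr) \;=\; \tau^2.
\]
Since $L$ is smooth in $(\by,s,\tau)$ up to $s=0$ and is bounded below by $\tau_1^2>0$ on the compact set $\mS^2\times\{0\}\times[\tau_1,\tau_2]$, a compactness argument produces $s_0>0$, depending only on $\tau_1,\tau_2,f$, such that $L>0$ (and $P_\tau<0$) throughout $\mS^2\times(0,s_0)\times(\tau_1,\tau_2)$. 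Hence $g(\nabla\tau,\nabla\tau)<0$ on each $\Sigma_\tau$, which is therefore spacelike. Smoothness up to $\cI^+$ is immediate because $P$ is a smooth polynomial in $s$ with smooth coefficients, and setting $s=0$ in $v=-P(\by,s,\tau)$ identifies $\Sigma_\tau\cap\cI^+$ with the cut $\mathcal{C}$ associated to $f$ via $P(\by,0,\tau)=f(\by)$. The only subtle point worth flagging is the uniformity of $s_0$ in $\tau$, which the compactness argument handles; beyond that the argument is essentially bookkeeping.
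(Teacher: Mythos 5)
Your argument matches the paper's proof: both assemble $\nabla\tau$ from the partials supplied by Lemma~\ref{l-parametriztaion}, compute its norm via $g^{ab}=s^2\bar g^{ab}$ and the block form of $\bar g^{ab}$ to land on $-s^2P_\tau^{-2}L$, evaluate $L|_{s=0}=\tau^2$ from \eqref{e-phi-psi}, and invoke smoothness/compactness to get a uniform $s_0$. The only cosmetic difference is that you route the computation through $F=v+P$ and $\bar\nabla F$ explicitly, whereas the paper abbreviates the same calculation as ``direct computation shows'' and points to \eqref{e-L-1}.
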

\begin{proof} First let  $s_0$ be as in Lemma \ref{l-parametriztaion} so that $P_\tau<0$. Recall that $(y^1,y^2, y^3, y^4)=(y^1,y^2, s, v)$. Denote the coordinate frame by $\p_a$.  For $\tau\in (\tau_1,\tau_2)$, by \eqref{e-dtau},  we have
\bee
\begin{split}
\nabla \tau=&g^{ab}\frac{\p \tau}{\p y^a}\p_b  \\
=&\lf(g^{vb}\frac{\p \tau}{\p v}+g^{ib}\frac{\p\tau}{\p y^i}\ri)\p_b \\
=&-\frac1{P_\tau}\lf(g^{vb} +g^{ib}P_i\ri)\p_b .\\
\end{split}
\eee
On the other hand, direct computation shows
\bee
\begin{split}
\la\nabla\tau,\nabla\tau\ra=&s^2\ol g^{ab}\frac{\p\tau}{\p y^a}\frac{\p\tau}{\p y^b}\\
=&-s^2P_\tau^{-2}L.
\end{split}
\eee
By \eqref{e-dP}, $P_\tau=-(\tau s+O(s))$. By \eqref{e-dP} and \eqref{e-L},
 \be\label{e-L-1}
 \begin{split}
 L=&-\lf(-\tau^2-|\wt \nabla f|^2+|\wt\nabla f|^2+O(s)\ri)\\
 =&\tau^2+O(s).
 \end{split}
 \ee
 It is easy to see that  if $0<s_0<\frac1{2m}$ is small enough, depending only   on $\tau_1$,  $\tau_2$ and $  f$, then $\Sigma_\tau$ is spacelike in $0<s<s_0$.
 The last assertion is obvious.
\end{proof}
Let $s_0$ be as in the lemma. Since $\frac{\p}{\p t}=\p_v$, we have
\be\label{e-tau-t}
\begin{split}
g(\nabla \tau,\frac{\p}{\p t})=&-P_\tau^{-1}g((g^{va}+g^{ia}P_i)\p_a,\p_v)\\
=&-P_\tau^{-1}(g^{va}+g^{ia}P_i)g_{av}\\
=&-P_\tau^{-1}\\
>&0.
\end{split}
\ee
So $\tau$ is a time function on $\Phi(M)$ with $\nabla\tau$ being past directed.

\section{Estimates on the foliation}\label{s-est}
Let $s_0$ be as in Lemma \ref{l-foliation-1} so that $\Sigma_\tau$ is spacelike for $0<s<s_0$. Let $M=   \mS^2\times (0,s_0) \times(\tau_1,\tau_2)$. Then $\Phi$ is   a parametrization of $\Phi(M)$, with $\tau$ being a time function. Recall that, if $\theta$ is a function in $\by, s, \tau$, then the partial derivatives will be denoted by $\theta_A, \theta_s, \theta_\tau$ etc.   On the other hand,  when consider $\tau$ as a function of $(y^1,y^2,y^3,y^4)=(y^1,y^2, s, v)$, the derivative of $\theta$ with respect to $y^a$ will be denoted by $\p_a \theta$.
Hence
\be\label{e-chain-rule}
\p_A\theta=\theta_\tau \tau_A+\theta_A,\  \p_s \theta=\theta_\tau \tau_s+\theta_s, \ \ \p_v\theta=\theta_\tau \tau_v.
\ee

Let $T$ be the unit future pointing timelike normal of $\Sigma_\tau$ so that
\be\label{e-T}
T=-\a \nabla\tau
\ee
where $\a>0$ is the {\it lapse} of $\tau$ given by
\be\label{e-lapse}
\a^2=-(\la \nabla \tau,\nabla \tau\ra)^{-1}=s^{-2}P_\tau^2 L^{-1}.
\ee
For a spacelike hypersurface $\Sigma$ with future directed unit normal $\bn$, the {\it tilt factor} $\nu$ with respect to $T$ is defined as $\nu=-\gsch(T,\bn)$.

We want to apply a result of Bartnik \cite{Bartnik1984} to estimate the tilt factor for spacelike surfaces in $\Phi(M)$. First recall the following setting in the Bartnik's work.  In $\Phi(M)$, introduce the Riemannian metric $\Theta$:
\be\label{e-Riemannian}
\Theta=g_{\mathrm{Sch}}+2\omega\otimes \omega
\ee
where $\omega$ is the dual of the unit normal $T$. For example, for a vector field $V$, $||V||_\Theta^2=\sum_{i=1}^3\la V,w_i\ra^2+\la V,T\ra^2$, where $w_1, w_2, w_3$  form an orthonormal basis of $\Sigma_\tau$ with respect to metric induced by the Schwarzschild metric $g$.
In order to apply \cite[Theorem 3.1(iii)]{Bartnik1984} (see also remarks on \cite[p.162]{Bartnik1984}) to a compact spacelike hypersurface $\Sigma$ with smooth boundary $\p \Sigma$ in $\Phi(M)$ so that $\tau$=constant on $\p\Sigma$, we need to estimate the following quantities:

\be\label{e-quantities}
\a, ||\a^{-1}\nabla \a||_\Theta, ||\cK||_{\Theta}, ||\nabla T||_\Theta, ||\nabla\nabla T||_\Theta, ||\vec H_{\p \Sigma}||_\Theta
\ee
where $\cK$ is the second fundamental form of $\Sigma_\tau$ and $\nabla$ is the connection of $\gsch$ and $\vec H_{\p\Sigma}$ is the mean curvature vector of $ \p \Sigma$. We have used the fact that the $\gsch$ is Ricci flat. Our result will be summarized in Theorem \ref{t-est} below.
We proceed as in \cite{AnderssonIriondo1999}.

Since we may cover $\mathbb{S}^2$ with finitely many coordinate neighborhoods, we may work on a coordinate neighborhood first. Hence let us fix   a coordinate neighborhood $U$ with local coordinates $y^1, y^2$.   The coordinate frame  with respect to this coordinate is given by:
\be\label{e-frame-1}
\left\{
  \begin{array}{ll}
e_A=:\Phi_*(\frac{\p}{\p y^A})=-P_{A} \p_v+\p_{A}, A=1, 2;\\
e_3=:\Phi_*(\frac{\p}{\p s })=-P_s\p_v+ \p_s;\\
  e_4=:\Phi_*(\frac{\p}{\p\tau})=-P_\tau \p_v.
  \end{array}
\right.
\ee
Here $\p_a$ are coordinate frames with respect to $y^1, y^2, y^3=s, y^4=v$.  Note that if $\theta$ is a smooth function in $\by, s, \tau$, then $e_A(\theta)=\theta_A$ etc.
Note also that
$e_1, e_2, e_3$ are tangential to $\Sigma_\tau$, i.e. $\tau$=constant.
It is easy to see:
\be\label{e-frame-2}
\left\{
  \begin{array}{ll}
   \p_v=-\frac1{P_\tau}e_4;\\
\p_s=-\frac{P_s}{P_\tau}e_4+e_3; \\
\p_A=-\frac{P_A}{P_\tau}e_4+e_A, \ \   A=1, 2.
  \end{array}
\right.
\ee

We may assume that $\sigma_{AB}$ is smooth up to the boundary of $U$  and that the eigenvalues of  $(\sigma_{AB})$ is bounded below by some constant $\lambda>0$.
\vskip .2cm

{\it Notation}: In the following $c(s^\ell), c_{ab}(s^{\ell}), \dots$ for integers $\ell$ will denote functions of the form $s^{\ell}\Lambda$ where $\Lambda$ is a smooth function in $\by, s, \tau$ in $\ol U\times[0,s_0]\times [\tau_1,\tau_2]$. They may vary from line to line.  For example, in \eqref{e-frame-2}, we have
$$
\p_v=c(s^{-1})e_4,
$$
if $s_0$ is small enough.

\begin{lma}\label{l-basis} In the above setting, for $\by\in U$, then the following are true:
\begin{enumerate}
\item[(i)] The metric $\ol g$ in the frame $e_a$ is given by
\bee\label{e-metric-1}
\left\{
  \begin{array}{ll}
    \ol g(e_A, e_B)=\sigma_{AB}-s^2(1-2ms)P_AP_B, 1\le A, B\le 2;  \\
\ol g(e_A, e_3)=\ol g(e_3, e_A)=-P_A-s^2(1-2ms)P_AP_s, 1\le A\le 2;\\
\ol  g(e_3, e_3)=-2P_s-s^2(1-2ms)P_s^2\\
\ol g(e_A, e_4)=\ol g(e_4, e_A)=-s^2(1-2ms)P_\tau P_A, 1\le A\le 2; \\
\ol g(e_3, e_4)=\ol g(e_4, e_3)=-s^2(1-2ms)P_\tau P_s;\\
\ol g(e_4, e_4)=-s^2(1-2ms)P_\tau^2.
  \end{array}
\right.
\eee
  \item [(ii)] Let $\{\ve_1, \ve_2, \ve_3\}$ be an orthonormal basis for $\Sigma_\tau$ with respect to $\ol g$ obtained from $e_1, e_2, e_3$ using Gram-Schmidt process with respect to the metric induced by $\ol g$. Then $\ve_i=c_{ik}(s^0) e_k$, $e_i=c^{ik}(s^0)\ve_k$.
  \item[(iii)] If $s_0>0$ is small enough depending only $\tau_1, \tau_2$  and $  f$, then
 $\a=1+c(s)$ and
$$
T=c_i(s)e_i+\a^{-1}e_4.
$$

\end{enumerate}

\end{lma}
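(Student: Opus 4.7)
The plan is to treat the three parts in turn, doing nothing fancier than direct computation from the frame \eqref{e-frame-1} and the coordinate expressions \eqref{e-unphysical-1}--\eqref{e-unphysical-2} for $\ol g$.

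For (i), I simply expand bilinearly. Since $e_A=-P_A\p_v+\p_A$, $e_3=-P_s\p_v+\p_s$, $e_4=-P_\tau\p_v$ and the only nonzero pairings in $\ol g$ are $\ol g(\p_A,\p_B)=\sigma_{AB}$, $\ol g(\p_s,\p_v)=1$, and $\ol g(\p_v,\p_v)=-s^2(1-2ms)$, each of the six listed entries falls out immediately.

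For (ii), the key observation is that the induced Gram matrix $G_{ij}:=\ol g(e_i,e_j)$, $1\le i,j\le 3$, is positive definite at $s=0$. By \eqref{e-dP} and \eqref{e-phi-psi}, $P_A|_{s=0}=f_A$ and $P_s|_{s=0}=\phi=-\tfrac12(\tau^2+|\wn f|^2)$, so part (i) gives
\[
G|_{s=0}=\begin{pmatrix}\sigma_{AB} & -f_A \\ -f_B & \tau^2+|\wn f|^2\end{pmatrix}.
\]
A Schur complement (using $\sigma^{AB}f_Af_B=|\wn f|^2$) shows $\det G|_{s=0}=\det(\sigma)\cdot\tau^2>0$, which together with positivity of $(\sigma_{AB})$ forces $G|_{s=0}$ to be positive definite. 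By continuity and compactness of $\ol U\times[\tau_1,\tau_2]$, the same holds uniformly on $\ol U\times[0,s_0]\times[\tau_1,\tau_2]$ for $s_0$ small. The Gram--Schmidt change-of-basis coefficients are then rational functions of the entries of $G$ with non-vanishing denominators, so they extend smoothly to $s=0$; that is exactly the $c_{ik}(s^0)$, $c^{ik}(s^0)$ assertion.

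For (iii), the expansion of $\alpha$ is immediate: \eqref{e-lapse} combined with $P_\tau=-\tau s+c(s^2)$ (from \eqref{e-dP}) and $L=\tau^2+c(s)$ (from \eqref{e-L-1}) yields $\alpha^2=1+c(s)$, hence $\alpha=1+c(s)$ after shrinking $s_0$ to keep it positive. To express $T=-\alpha\nabla\tau$ in the $e_a$-frame I substitute the inverse physical metric $g^{ab}=s^2\ol g^{ab}$ from \eqref{e-unphysical-2} into the formula for $\nabla\tau$ in Lemma \ref{l-foliation-1} and convert each $\p_a$ into $e_a$ via \eqref{e-frame-2}. The $e_A$ and $e_3$ components each carry an explicit prefactor $s^2/P_\tau=c(s)$, producing a contribution of the form $c_i(s)e_i$. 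The $e_4$-coefficient telescopes to
\[
\frac{s^2}{P_\tau^2}\lf[\sigma^{AB}P_AP_B+2P_s+s^2(1-2ms)P_s^2\ri],
\]
which by the definition \eqref{e-L} equals $-s^2L/P_\tau^2=-\alpha^{-2}$; multiplying by $-\alpha$ gives $T=c_i(s)e_i+\alpha^{-1}e_4$.

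The only step requiring a little care, rather than pure mechanics, is recognising the bracket in the $e_4$-coefficient of $\nabla\tau$ as precisely $-L$. This is forced by the consistency $\la\nabla\tau,\nabla\tau\ra=-\alpha^{-2}$ already recorded in Lemma \ref{l-foliation-1} together with \eqref{e-lapse}, so no essential obstacle is expected; it merely demands that one collects the terms honestly.
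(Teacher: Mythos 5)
Your proposal is correct and follows essentially the same route as the paper: part (i) by direct bilinear expansion, part (ii) by showing the Gram matrix $\ol g_{ij}$ is uniformly positive definite up to $s=0$, and part (iii) by substituting $g^{ab}=s^2\ol g^{ab}$ into $\nabla\tau$ and converting to the $e_a$-frame, with the $e_4$-coefficient collapsing via the identity defining $L$. The only substantive difference is in (ii): the paper verifies positive definiteness of $\ol g_{ij}|_{s=0}$ through an $\e$-weighted Cauchy--Schwarz estimate, whereas you compute the Schur complement $\tau^2>0$, which is a bit cleaner and makes uniformity in $(\by,\tau)$ transparent. One small imprecision: the Gram--Schmidt coefficients $c_{ik}$ are not rational in the entries of $G$ but involve square roots of principal minors; since those minors are bounded away from zero on $\ol U\times[0,s_0]\times[\tau_1,\tau_2]$, the coefficients still extend smoothly, so the conclusion $c_{ik}=c(s^0)$ holds, but the word ``rational'' should be ``algebraic'' (or simply ``smooth, with denominators and radicands bounded below'').
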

\begin{proof}   Using  \eqref{e-frame-1} and \eqref{e-unphysical-1}, direct computations give (i).

In the following, we always assume $s_0>0$ is small depending only on $\tau_1, \tau_2$ and $f$. Let $\ol g_{ab}=\ol g(e_a,e_b)$. It is easy to see that $\ol g_{ab}$ can be extended smoothly up to $s=0$. Moreover, at $s=0$, $P_A=f_A$.  Hence at $s=0$, for any $(\xi^1,\xi^2, \xi^3)\in \R^3$, let $f_A=\sigma_{AB}f^B$, for any $\e>0$ we have:
\bee
\begin{split}
\ol g_{ij}\xi^i\xi^j=&\sigma_{AB}\xi^A\xi^B-2f_A \xi^A\xi^3+(\tau^2+|\wn f|^2 )(\xi^3)^2\\
=&\sigma_{AB}\xi^B\xi^B-2\sigma_{AB}f^B \xi^A\xi^3+(\tau^2+\sigma_{AB}f^Af^B )(\xi^3)^2
\\
\ge& \sigma_{AB}\xi^B\xi^B-\lf(\e \sigma_{AB}\xi^A\xi^B+\e^{-1}\sigma_{AB}f^Af^B(\xi^3)^2\ri)
+(\tau^2+\sigma_{AB}f^Af^B )(\xi^3)^2\\
=&(1-\e)\sigma_{AB}\xi^B\xi^B+(\tau^2+(1-\e^{-1}\sigma_{AB}f^Af^B )(\xi^3)^2\\
\ge &C\lf((\xi^1)^2+(\xi^2)^2+(\xi^3)^2\ri),
\end{split}
\eee
for some $C>0$ depending only on $\lambda, \tau_1, \tau_2$ and $ |\wn f|$,    if we choose $\e<1$, $\e$ close to 1 so that $\tau^2+(1-\e^{-1}\sigma_{AB}f^Af^B )\ge \tau^2/2$. On the other hand,  away from $s=0$, $(\ol g_{ij})$ is smooth and positive definite. Let $\ve_i=c_{ik}e_k$ as in the lemma, one can see that $c_{ik}$ are smooth function of $y^a$.
 On the other hand,
 $$
 \delta_{ij}=c_{ik}c_{jl}\ol g_{kl}.
 $$
 Hence $ \ol g^{ij} =c_{ki}c_{kj}$. In particular, $\ol g^{ii}=\sum_{ik}c_{ik}^2$.
From this one can conclude $c_{ik}=c(s^0)$.  Similarly one can prove that $c^{ik}=c(s^0)$.

(iii) By \eqref{e-lapse}, \eqref{e-dP} and \eqref{e-L-1},
\bee
\begin{split}
\a=&-s^{-1}P_\tau L^{-\frac12}\\
=&1+c(s).
\end{split}
\eee

 By Lemma \ref{l-foliation-1}
\bee
\begin{split}
T=&-\a\nabla\tau\\
=&\a P_\tau^{-1}\lf(g^{vb}\p_v+g^{ib}P_i\ri)\p_b\\
=&\a s^2P_\tau^{-1}\bigg[\sigma^{BA}P_B\p_A+(1+s^2(1-2ms)P_s)\p_s+P_s \p_v\bigg]\\
=&\a s^2P_\tau^{-1}\bigg[ \sigma^{BA}P_B(-\frac{P_A}{P_\tau }e_4+e_A)+(1+s^2(1-2ms)P_s)(-\frac{P_s}{P_\tau }e_4+e_3)- \frac{P_s}{P_\tau}e_4 \bigg]\\
=&\a s^2P_\tau^{-1}\lf(\sigma^{BA}P_B e_A+(1+s^2(1-2ms)P_s) e_3\ri)+\a s^2P_\tau^{-2}L e_4\\
=&c(s)e_i+\a^{-1}e_4,
\end{split}
\eee
by (iii), \eqref{e-dP}, \eqref{e-lapse} and \eqref{e-frame-2}. This completes the proof of the lemma.
\end{proof}

Let
\be\label{e-w}
w_i=s \ve_i, \ i=1, 2, 3.
\ee
 Then $w_i$ form an orthonormal frame for $\Sigma_\tau$ with respect to the metric induced by the Schwarzschild metric $g$.
\begin{lma}\label{l-lapse}
 If $s_0$ is small enough, depending only on $\tau_1, \tau_2$  and $  f$,  then $\a , \a^{-1}, ||\nabla \a||_\Theta$ are uniformly bounded in $U\times(0,s_0)\times (\tau_1,\tau_2)$.
 \end{lma}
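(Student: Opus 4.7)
The plan is to read off both estimates directly from the frame-adapted expressions already collected in Lemma \ref{l-basis}. The bounds on $\a$ and $\a^{-1}$ are immediate: part (iii) of that lemma gives $\a=1+c(s)$, so choosing $s_0$ small enough (depending only on $\tau_1,\tau_2,f$) squeezes $\a$ into, say, $[\tfrac12,2]$ on $U\times(0,s_0)\times(\tau_1,\tau_2)$, which also bounds $\a^{-1}$.

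For $||\n\a||_\Theta$, I would first observe that the enlarged frame $\{w_1,w_2,w_3,T\}$ is $\Theta$-orthonormal. Since $\omega(w_i)=\gsch(T,w_i)=0$ and $\omega(T)=\gsch(T,T)=-1$, a direct expansion of $\Theta=\gsch+2\omega\otimes\omega$ yields $\Theta(w_i,w_j)=\delta_{ij}$, $\Theta(T,w_i)=0$, $\Theta(T,T)=1$. Consequently
$$
||\n\a||_\Theta^2=\sum_{i=1}^3 w_i(\a)^2+T(\a)^2,
$$
so it suffices to bound the four directional derivatives $w_i(\a)$ and $T(\a)$.

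Each of these is straightforward once one notes that, by \eqref{e-lapse}, \eqref{e-dP} and the expansion $L=\tau^2+O(s)$ from \eqref{e-L-1}, the lapse $\a=-s^{-1}P_\tau L^{-1/2}$ extends as a smooth function of $(\by,s,\tau)$ up to $s=0$ (with $s_0$ small enough that $L$ is bounded away from zero). Applying the frame \eqref{e-frame-1}, the coordinate derivatives $e_A(\a)=\a_A$, $e_3(\a)=\a_s$, $e_4(\a)=\a_\tau$ are therefore uniformly bounded on $\ol U\times[0,s_0]\times[\tau_1,\tau_2]$. Lemma \ref{l-basis}(ii) lets me write $\ve_i=c_{ik}(s^0)e_k$, so $w_i(\a)=s\,\ve_i(\a)$ is $O(s)$; and Lemma \ref{l-basis}(iii) gives $T=c_i(s)e_i+\a^{-1}e_4$, so that $T(\a)=c(s)\cdot O(1)+\a^{-1}\a_\tau=O(1)$ thanks to the bound on $\a^{-1}$ from the first step.

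The only step requiring a little care is verifying smoothness of $\a$ up to $s=0$; everything else is a direct computation in the orthonormal frame. I do not expect a real obstacle here, since the cancellation of the $s^{-1}$ against $P_\tau$ was already implicit in the proof of Lemma \ref{l-basis}(iii), and the same reasoning will be reused verbatim when the next lemmas estimate $\cK$, $\n T$ and $\n\n T$.
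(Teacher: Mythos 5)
Your proof is correct and takes essentially the same route as the paper: use $\a=1+c(s)$ from Lemma~\ref{l-basis}(iii) to bound $\a,\a^{-1}$, then estimate the directional derivatives $w_i(\a)=s\ve_i(\a)$ and $T(\a)=(c_i(s)e_i+\a^{-1}e_4)(\a)$ by noting that the coordinate derivatives $\a_A,\a_s,\a_\tau$ are bounded up to $s=0$. Your explicit check that $\{w_1,w_2,w_3,T\}$ is $\Theta$-orthonormal (and hence $\|\n\a\|_\Theta^2=\sum_i w_i(\a)^2+T(\a)^2$) is left implicit in the paper, but is exactly the standard reduction being used; the cancellation of $s^{-1}$ against $P_\tau$ giving smoothness of $\a$ up to $s=0$ was indeed already established in Lemma~\ref{l-basis}(iii), so no new work is needed there.
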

\begin{proof} The estimates of $\a, \a^{-1}$ follow immediately from Lemma \ref{l-basis}.
Let us estimate the derivatives of $\a$. By Lemma \ref{l-basis}, \eqref{e-frame-1}
and
\bee
\begin{split}
w_i(\a)=&s\ve_i(\a)\\
=&s c_{ik}(s^0)e_k(1+c(s))\\
=&c_i(s).
\end{split}
\eee
\bee
\begin{split}
T(\a)=&(c_i(s)e_i+\a^{-1}e_4) (\a)\\
=&c(s).
\end{split}
\eee
Hence $||\nabla \a||_\Theta$ is uniformly bounded in $U\times(0,s_0)\times(\tau_1,\tau_2)$.
\end{proof}

Let $\cK$ be the second fundamental form of $\Sigma_\tau$. We want to estimate $||\cK||_\Theta$. Since the metric $\ol g$ is a product metric, it is more easy to compute the second fundamental form with respect to $\ol g$. Let us recall the following fact:

\begin{lma}\label{l-2nd-conformal}
Let $\Sigma$ be a spacelike hypersurface in a spacetime $(M,g)$. Suppose $g=e^{2\lambda}\ol g$.  Let $\bn$ be a unit normal of $M$ with respect to $g$. Let $\ol\bn=e^{\lambda}\bn$, which is a unit normal with respect to $\ol g$. Let $\cK, \ol\cK$ be the second fundamental forms of $\Sigma$ with respect to $g, \bn$ and $\ol g, \ol\bn$ respectively. Then for any tangential vector fields $X,Y$, we have
\bee
\cK(X,Y)=e^{\lambda}\lf(\ol\cK(X,Y)+d\lambda(\ol\bn)\ol g(X,Y)\ri).
\eee
\end{lma}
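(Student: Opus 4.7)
The plan is to combine two standard ingredients: the conformal transformation rule for Levi-Civita connections, and the fact that the second fundamental form is obtained by projecting the ambient covariant derivative onto the normal direction. Write $\nabla$ and $\ol\nabla$ for the Levi-Civita connections of $g$ and $\ol g$ respectively. Since $g=e^{2\lambda}\ol g$, the classical pointwise identity
\[
\nabla_X Y=\ol\nabla_X Y+d\lambda(X)\,Y+d\lambda(Y)\,X-\ol g(X,Y)\,\ol\nabla\lambda
\]
holds, where $\ol\nabla\lambda$ is the $\ol g$-gradient of $\lambda$. I would either quote this or derive it in one line from the Koszul formula as the first step.

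Next, I would adopt the sign convention $\cK(X,Y)=-g(\nabla_X Y,\bn)$, together with the analogous definition for $\ol\cK$ in terms of $\ol\nabla$ and $\ol\bn$. The relation $\bn=e^{-\lambda}\ol\bn$ combined with $g=e^{2\lambda}\ol g$ gives at once
\[
\cK(X,Y)=-e^{\lambda}\,\ol g(\nabla_X Y,\ol\bn).
\]
Then I would substitute the conformal identity for $\nabla_X Y$ into the right-hand side. Because $X$ and $Y$ are tangent to $\Sigma$ while $\ol\bn$ is $\ol g$-normal to $\Sigma$, the two middle terms $d\lambda(X)\,Y$ and $d\lambda(Y)\,X$ pair with $\ol\bn$ to give zero, and the only surviving correction is $-\ol g(X,Y)\,\ol g(\ol\nabla\lambda,\ol\bn)=-\ol g(X,Y)\,d\lambda(\ol\bn)$. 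Collecting the remaining terms and multiplying by $-e^{\lambda}$ reproduces exactly the claimed identity $\cK(X,Y)=e^{\lambda}\bigl(\ol\cK(X,Y)+d\lambda(\ol\bn)\,\ol g(X,Y)\bigr)$.

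There is no serious obstacle here: the argument is purely algebraic, requires no analytic input, and proceeds by a direct unwinding of definitions. The only place where one must be careful is the bookkeeping of signs. Since $\bn$ is timelike in the present setting, the same sign convention must be used consistently for $\cK$ and $\ol\cK$; the two factors of $-1$ coming from the definitions cancel, which is why no sign depending on the causal character of $\bn$ appears in the final formula. Once this is fixed, the computation fits in a few lines.
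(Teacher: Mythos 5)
Your proof is correct and follows essentially the same route as the paper: both invoke the conformal transformation rule for the Levi-Civita connection, note that the two terms involving $d\lambda(X)$ and $d\lambda(Y)$ vanish when paired with the normal because $X,Y$ are tangent to $\Sigma$, and then convert between the $g$- and $\ol g$-pairings using $\bn=e^{-\lambda}\ol\bn$ and $g=e^{2\lambda}\ol g$. The only cosmetic difference is that the paper writes the connection difference tensor $\Gamma(X,Y)$ implicitly via its pairing with an arbitrary $Z$, whereas you write it explicitly with the $\ol g$-gradient $\ol\nabla\lambda$; these are the same identity.
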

\begin{proof} Let $\nabla, \ol\nabla$ be the   connections of $g, \ol g$ respectively. Then
 any smooth vector fields $X, Y$, we have
$$
\nabla_XY=\ol\nabla_XY+\Gamma(X,Y),
$$
where $\Gamma$ is given by
$$
g(\Gamma(X,Y), Z)= X(\lambda) g (Y,Z)+Y(\lambda)g(X,Z)-Z(\lambda)g(X,Y).
$$
 Let $X,Y$ be tangent to $\Sigma$. Then
\bee
\begin{split}
\cK(X,Y)=&-g(\nabla_X Y,\bn)\\
=&-g(\ol\nabla_XY,\bn)-X(\lambda) g (Y,\bn)-Y(\lambda)g(X,\bn)+\bn(\lambda)g(X,Y)\\
=&-e^{\lambda}\ol g(\ol\nabla_XY,\ol\bn)+e^{\lambda}\ol \bn(\lambda)\ol g(X,Y)\\
=&e^{\lambda}\lf(\ol\cK(X,Y)+d\lambda(\ol\bn)\ol g(X,Y)\ri).
\end{split}
\eee

\end{proof}

In our case, $\bn=T$, $\lambda=-\log s$. Let $\ol\bn=e^\lambda T=s^{-1}T$. Then by Lemma \ref{l-foliation-1},
$$
d\lambda(\ol\bn)=-s^{-2}T(s)=\a s^{-2}\nabla \tau(s)=-\a P_\tau^{-1}\lf(1+s^2(1-2ms)P_s)\ri).
$$
So the second fundamental forms $\cK$, $\ol \cK$ of $\Sigma_\tau$ with respect to $g, \ol g$ are related by:
\be\label{e-2nd-conformal}
\cK=s^{-1}\lf[\ol\cK-\a P_\tau^{-1}  \lf(1+s^2(1-2ms)P_s\ri) \ol g\ri].
\ee
The following lemma basically is contained in \cite{LST}.
\begin{lma}\label{l-2nd-est} Let $\cK$ be the second fundamental form of $\Sigma_\tau$. Then in $U\times(0,s_0)\times(\tau_1,\tau_2)$
\bee
\cK(w_i,w_j)=\tau^{-1}\delta_{ij}+c_{ij}(s),
\eee
where $w_1, w_2, w_3$ are given by \eqref{e-w} which form an orthonormal basis of $\Sigma_\tau$ with respect to $g$. In particular, $||\cK||_{\Theta}$ is uniformly bounded.
\end{lma}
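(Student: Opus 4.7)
The plan is to combine Lemma \ref{l-2nd-conformal}---already specialized in equation \eqref{e-2nd-conformal}---with the smoothness of the unphysical data up to $\cI^+$. First, I will evaluate \eqref{e-2nd-conformal} on the $g$-orthonormal basis $w_i=s\ve_i$. Using $\ol g(\ve_i,\ve_j)=\delta_{ij}$, this reads
\[
\cK(w_i,w_j)=s\,\ol\cK(\ve_i,\ve_j)-s\a P_\tau^{-1}\bigl(1+s^2(1-2ms)P_s\bigr)\delta_{ij}.
\]
The second term produces the leading $\tau^{-1}\delta_{ij}$: from \eqref{e-dP} I have $P_\tau=-\tau s(1-\tfrac12 s\wt\Delta f)$, and Lemma \ref{l-basis}(iii) gives $\a=1+c(s)$, so a short expansion yields
\[
-s\a P_\tau^{-1}\bigl(1+s^2(1-2ms)P_s\bigr)=\tau^{-1}+c(s).
\]

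The heart of the argument is then to show $s\,\ol\cK(\ve_i,\ve_j)=c_{ij}(s)$, which reduces to showing that $\ol\cK$ extends as a smooth tensor up to $s=0$. This rests on three facts: (a) $\ol g$ is a smooth Lorentz metric on $\mS^2\times[0,1/(2m))\times\R$ with smooth inverse given in \eqref{e-unphysical-2}; (b) $\Sigma_\tau=\{F:=v+P(\by,s,\tau)=0\}$ is smoothly embedded up to $s=0$ since $P$ is smooth there; and (c) by \eqref{e-L-1}, $-\ol g(\ol\nabla F,\ol\nabla F)=L=\tau^2+O(s)$ is bounded away from $0$ once $s_0$ is small, so $\Sigma_\tau$ is uniformly spacelike for $\ol g$ all the way to the boundary $\cI^+$. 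Together these imply that $\ol\cK$, computed from $\ol g$ and the smooth unit normal $\ol\nabla F/\sqrt{L}$, depends smoothly on $(\by,s,\tau)\in\ol U\times[0,s_0]\times[\tau_1,\tau_2]$. Combining with Lemma \ref{l-basis}(ii), which expresses $\ve_i=c_{ik}(s^0)e_k$ in terms of the smooth coordinate frame $e_a$, yields smoothness and hence uniform boundedness of $\ol\cK(\ve_i,\ve_j)$, so the $s$-prefactor absorbs it into the $c_{ij}(s)$ remainder.

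The connection to \cite{LST} is that the specific choice of $\phi,\psi$ in \eqref{e-phi-psi} was tailored so that the \emph{trace} $H=\sum_i \cK(w_i,w_i)$ equals $\tau^{-1}$ at $s=0$ with vanishing first $s$-derivative; the present lemma upgrades this to a componentwise, umbilic-to-leading-order statement via the conformal transformation formula. The main obstacle I expect is point (c): verifying that $\ol\cK$ is actually smooth across $\cI^+$ requires that the $s^{-2}$ singularity from $g=s^{-2}\ol g$ be fully absorbed into the rescaled setting, which is precisely what the uniform spacelikeness of $\Sigma_\tau$ relative to $\ol g$ provides.

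Finally, for the bound on $\|\cK\|_\Theta$: since $\{w_1,w_2,w_3\}$ is $g$-orthonormal on $\Sigma_\tau$ and $\Theta=g+2\omega\otimes\omega$ agrees with $g$ on $T\Sigma_\tau$, the frame $\{w_1,w_2,w_3,T\}$ is $\Theta$-orthonormal. Treating $\cK$ as extended trivially in the normal direction, $\|\cK\|_\Theta^2=\sum_{i,j}\cK(w_i,w_j)^2$, and each summand is uniformly bounded by the estimate just obtained, giving the claimed bound.
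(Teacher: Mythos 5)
Your proof is correct, and for the key step it takes a genuinely different route from the paper. Both arguments pass through the conformal identity \eqref{e-2nd-conformal} and both reduce the lemma to showing that $\ol\cK(e_i,e_j)$ is bounded (smooth) up to $s=0$, so that the $s$-prefactor turns $s\,\ol\cK(\ve_i,\ve_j)$ into a $c_{ij}(s)$ term. Where you differ is in \emph{how} you establish that boundedness. The paper proves it as a separate statement, Lemma \ref{l-cK}, by writing out the Christoffel symbols of $\ol g$ in the $(\by,s,v)$ coordinates (equation \eqref{e-connection-1}), expanding $\ol\nabla_{e_i}e_j$ in \eqref{e-2nd-1}, computing the components of the unphysical normal $\ol\bn$ in \eqref{e-2nd-2}, and then combining everything with \eqref{e-dP}. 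You instead argue softly: $\ol g$ is a smooth Lorentz metric up to $\cI^+$, the surface $\Sigma_\tau$ is smoothly embedded up to $s=0$ because $P$ is, and the $\ol g$-normal $\ol\nabla F/\sqrt{L}$ is a smooth nonvanishing vector field there because $L=\tau^2+O(s)$ is bounded away from zero; hence $\ol\cK$, being built entirely from these smooth and nondegenerate ingredients, extends smoothly to $s=0$ and is uniformly bounded on the compact set $\ol U\times[0,s_0]\times[\tau_1,\tau_2]$. Your argument is shorter, requires no Christoffel-symbol bookkeeping, and makes transparent exactly where the unphysical rescaling absorbs the apparent $s^{-2}$ singularity of $g$; the paper's explicit computation is longer but produces the concrete formulas that are reused in the later estimates for $\nabla T$, $\nabla\nabla T$, and the boundary mean-curvature vector. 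The rest of your write-up --- the expansion $-s\a P_\tau^{-1}(1+s^2(1-2ms)P_s)=\tau^{-1}+c(s)$ and the identification of $\{w_1,w_2,w_3,T\}$ as a $\Theta$-orthonormal frame --- matches the paper's reasoning.
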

\begin{proof} Let $e_i, \ve_i$ be as in \eqref{e-frame-1} and Lemma \ref{l-basis}. By  Lemma \ref{l-cK} below, we have
$$
\ol\cK(e_i,e_j)=c_{ij}(s^0).
$$
Hence using Lemma \ref{l-basis}, \eqref{e-dP} and \eqref{e-2nd-conformal}, we have
\bee
\begin{split}
\cK(w_i,w_j)=&s^2\cK(\ve_i,\ve_j)\\
=&s\lf(\ol\cK(\ve_i,\ve_j) -\a P_\tau^{-1}  \lf(1+s^2(1-2ms)P_s\ri) \ol g(\ve_i,\ve_j)\ri)\\
=&s\ol\cK(\ve_i,\ve_j)-\a s P_\tau^{-1}  \lf(1+s^2(1-2ms)P_s\ri) \delta_{ij}\\
=&sc_{ik}(s^0)c_{jl}(s^0) \ol\cK(e_i,e_j)+\tau^{-1}\delta_{ij}+c(s)\\
=&\tau^{-1}\delta_{ij}+c(s).
\end{split}
\eee

\end{proof}

\begin{lma}\label{l-cK} With the notation as in Lemma \ref{l-2nd-est}, we have $\cK(e_i,e_j)=c_{ij}(s^0)$ in $U\times(0,s_0)\times(\tau_1,\tau_2)$
\end{lma}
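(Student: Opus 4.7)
The goal is to show that $\ol\cK(e_i,e_j)$, the second fundamental form of $\Sigma_\tau$ with respect to the unphysical metric $\ol g$ evaluated on the coordinate frame $\{e_i\}$, extends to a smooth function on $\ol U\times[0,s_0]\times[\tau_1,\tau_2]$; by the convention of \S\ref{s-est} this is precisely what $c_{ij}(s^0)$ means. (The ``$\cK$'' in the statement is read as $\ol\cK$, matching the invocation in the proof of Lemma \ref{l-2nd-est}; the physical $\cK$ cannot be $c_{ij}(s^0)$ in this frame since $\cK(w_i,w_j)=\tau^{-1}\delta_{ij}+c(s)$ forces $\cK(\ve_i,\ve_j)$ to blow up like $s^{-2}$.)

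First I would note that by \eqref{e-unphysical-1} the components of $\ol g$ in the coordinates $(y^1,y^2,s,v)$ are polynomials in $s$ with smooth dependence on $\by$ and no dependence on $v$; consequently the Christoffel symbols $\ol\Gamma^c_{ab}$ of $\ol g$ are smooth on $\ol U\times[0,s_0]$. By \eqref{e-foliation-1}--\eqref{e-dP}, $P$ and the partials $P_A,P_s,P_\tau$ are polynomials in $s$ with smooth coefficients in $(\by,\tau)$, so the coordinate-basis expansions of $e_i=\p_i-P_i\p_v$ (with $y^3=s$) and of $\ol\nabla_{e_i}e_j$ extend smoothly to $s=0$.

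Next I would write out the $\ol g$-unit, future-directed normal to $\Sigma_\tau$ explicitly. Since $g=s^{-2}\ol g$ we have $\ol\bn=s^{-1}T=-\alpha s^{-1}\nabla\tau$; combining $g^{ab}=s^2\ol g^{ab}$ with \eqref{e-unphysical-2} and the chain-rule identities $\p_v\tau=-P_\tau^{-1}$, $\p_s\tau=-P_s/P_\tau$, $\p_A\tau=-P_A/P_\tau$ of Lemma \ref{l-parametriztaion}, one obtains
\[
\ol\bn \;=\; \alpha\, s\, P_\tau^{-1}\Bigl\{P^A\p_A \;+\; \bigl[1+s^2(1-2ms)P_s\bigr]\p_s \;+\; P_s\,\p_v\Bigr\},\qquad P^A:=\sigma^{AB}P_B.
\]
The only ostensibly singular factor is $\alpha sP_\tau^{-1}$. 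But \eqref{e-dP} gives $P_\tau/s=-\tau\bigl(1-\tfrac12 s\wt\Delta f\bigr)$, smooth and nonvanishing on $\ol U\times[0,s_0]\times[\tau_1,\tau_2]$, while \eqref{e-lapse} and \eqref{e-L-1} combine to $\alpha=1+c(s)$. Hence $\alpha sP_\tau^{-1}$ extends smoothly to $s=0$, and therefore so do all coordinate-basis components of $\ol\bn$.

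Combining these two points, $\ol\cK(e_i,e_j)=-\ol g(\ol\nabla_{e_i}e_j,\ol\bn)$ is a smooth function of $(\by,s,\tau)$ on the closed set $\ol U\times[0,s_0]\times[\tau_1,\tau_2]$, which is the asserted statement $\ol\cK(e_i,e_j)=c_{ij}(s^0)$. The main---and essentially the only---obstacle is verifying the smoothness of $\ol\bn$ at $s=0$: the apparent $s^{-1}$ in $-\alpha s^{-1}\nabla\tau$ is cancelled precisely because $P_\tau$ vanishes to first order in $s$ with leading coefficient $-\tau\neq 0$, a structural feature of the ansatz \eqref{e-foliation-1}. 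Everything else is mechanical substitution, and one could in principle read off the leading coefficients of $\ol\cK(e_i,e_j)$ at $s=0$ from the explicit formulas above should a finer expansion be needed.
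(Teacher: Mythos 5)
Your proposal is correct and follows the same overall decomposition the paper uses: compute $\ol\cK(e_i,e_j)=-\ol g(\ol\nabla_{e_i}e_j,\ol\bn)$ with $\ol\bn=s^{-1}T$ and show each ingredient extends smoothly to $s=0$. You are also right that the $\cK$ in the lemma statement is a typo for $\ol\cK$ (the paper's own proof, and its use in Lemma~\ref{l-2nd-est}, make this clear), and your identification of the one nontrivial cancellation --- that $\alpha s P_\tau^{-1}$ is smooth and nonvanishing because $P_\tau/s\to-\tau\neq 0$ --- is exactly the key structural fact.

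The difference from the paper is one of explicitness rather than strategy. The paper computes the Christoffel symbols of $\ol g$ directly (its \eqref{e-connection-1}), writes out $\ol\nabla_{e_i}e_j$ in the coordinate frame (its \eqref{e-2nd-1}), and then verifies smoothness of the coefficients through the identity
$P_i\,\p_4(P_j)-\p_i(P_j)=-P_{ij}$,
which shows the individually singular coordinate partials $\p_4(P_j)=-P_{j\tau}/P_\tau$ and $\p_i(P_j)=P_{j\tau}\tau_i+P_{ji}$ combine into the smooth second partial $P_{ij}$ taken in the $(\by,s,\tau)$ coordinates. Your argument jumps past this by asserting directly that the coordinate-basis expansion of $\ol\nabla_{e_i}e_j$ is smooth because $P$ is polynomial in $s$. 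That conclusion is true, but the reasoning as stated is slightly too quick: the coordinate partials $\p_a(P_j)$ with respect to $(\by,s,v)$ are \emph{not} individually smooth at $s=0$ (they involve $\tau_a=-P_a/P_\tau\sim s^{-1}$). What saves the argument is precisely that $e_i=\Phi_*(\p/\p y^i)$, so $e_i(P_j)=P_{ij}$ is a second partial in the $(\by,s,\tau)$ chart on $M$ --- which \emph{is} smooth --- and it is this directional derivative, not the raw $\p_a(P_j)$, that enters $\ol\nabla_{e_i}e_j=\ol\nabla_{e_i}\p_j-e_i(P_j)\p_v-P_j\ol\nabla_{e_i}\p_v$. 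You should make this point explicit, since without it a reader could reasonably object that the $(\by,s,v)$-coordinate partials of $P_j$ blow up. With that clarification your proof is complete; what you lose relative to the paper's explicit computation is only the leading-order value of $\ol\cK(e_i,e_j)$ at $s=0$, which the lemma does not actually require.
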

\begin{proof}
Using Lemma \ref{e-unphysical-1}, direction computations show:
\be\label{e-connection-1}
\left\{
  \begin{array}{ll}
\ol\nabla_{\p_A}\p_B=\wn_{\p_A}\p_B, 1\le A,B\le 2;\\
    \ol\nabla_{\p_A}\p_a=  \ol \nabla _{\p_a}\p_B=0,\ 3\le a\le 4, 1\le A\le 2;\\
\ol\nabla_{\p_3}\p_3=0; \\
\ol\nabla_{\p_4}\p_4
=s^3(1-5ms+6m^2s^2)\p_3+s(1-3ms)\p_4\\
\ol\nabla_{\p_3}\p_4=\ol\nabla_{\p_4}\p_3=-s(1-3ms)\p_3.
  \end{array}
\right.
\ee
On the other hand,
\be\label{e-2nd-1}
\begin{split}
\ol\nabla_{e_i}e_j=&\ol\nabla_{(-P_i\p_4+\p_i)}(-P_j\p_4+\p_j)\\
=&P_i\p_4(P_j)\p_4+P_iP_j\ol\nabla_{\p_4}\p_4-\p_i(P_j)\p_4-P_i\ol\nabla_{\p_4}\p_j
+\ol\nabla_{\p_i}\p_j\\
=&\lf[P_i\p_4(P_j)-ms^2P_iP_j-\p_i(P_j)\ri]\p_4-ms^4(1-2ms)P_iP_j\p_3
-P_i\ol\nabla_{\p_4}\p_j+\ol\nabla_{\p_i}\p_j.
\end{split}
\ee
We want to compute $\ol g(\nabla_{e_i}e_j,\ol\bn)$ where $\ol\bn=s^{-1}T$ is the unit normal of $\Sigma_\tau$ with respect to $\ol g$.
By Lemma \ref{l-basis}, $g(T,e_4)=-\a$. By \eqref{e-frame-2},  and the fact that $\ol g(T,e_i)=0$, we have,
\be\label{e-2nd-2}
\left\{
  \begin{array}{ll}
    \ol g(\ol \bn, \p_i)=sg(T,\p_i)=s\a P_\tau^{-1}P_i,  & \hbox{$1\le i\le 3$;} \\
    \ol g(\ol\bn, \p_4)=s  g(T,\p_4)=s\a P_\tau^{-1}.
  \end{array}
\right.
\ee
Moreover,
\bee
\begin{split}
P_A\p_4(P_B)-\p_A(P_B)=&-P_\tau^{-1} P_Ae_4(P_B)+P_\tau^{-1}P_A e_4(P_B)-e_A(P_B)\\
=&-P_{AB}.
\end{split}
\eee
Similarly, for $1\le A\le 2$,
\bee
\begin{split}
P_3\p_4(P_A)-\p_3(P_A) =-P_{As}; P_3\p_4(P_3)-\p_3(P_3)=-P_{ss}.
\end{split}
\eee
Combining these with \eqref{e-2nd-1}, \eqref{e-2nd-2} and \eqref{e-dP}, the results follow.

\end{proof}

Next we want to estimate  of $||\nabla T||_\Theta$ and $||\nabla^2T||_{\Theta}$. First we have the following:

\begin{lma}\label{l-bracket}

  Let $w_i$ be as in \eqref{e-w}. Denote $T$ by $w_4$. Then
\bee
\left\{
  \begin{array}{ll}
   [w_i,w_j] =c_{ijk}(s^0)w_k,   1\le i, j, k\le 3 ; \\
   \hbox{$ [T,w_i]$}=\sum_{a=1}^4c_{ia}(s^0)w_a , 1\le i\le3.
  \end{array}
\right.
\eee

\end{lma}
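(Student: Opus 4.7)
The plan hinges on one observation: the frame $\{e_a\}_{a=1}^{4}$ is the pushforward under $\Phi$ of the coordinate frame $\p/\p y^a$ on $M = \mS^2\times(0,s_0)\times(\tau_1,\tau_2)$ (with the convention $y^4 := \tau$), and hence is holonomic:
$$
[e_a,e_b] = \Phi_*[\p/\p y^a,\p/\p y^b] = 0.
$$
Every bracket therefore reduces, via the Leibniz identity $[fX,gY] = fX(g)Y - gY(f)X + fg[X,Y]$, to a purely algebraic manipulation of the coefficients.

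Write $w_i = D_i^k e_k$ with $D_i^k = sc_{ik}(s^0)$; the extra factor of $s$ comes from $w_i = s\ve_i$, so each $D_i^k$ is of type $c(s^1)$. Expanding,
$$
[w_i,w_j] = D_i^k\,e_k(D_j^l)\,e_l - D_j^l\,e_l(D_i^k)\,e_k.
$$
Since $e_1,e_2 = \p/\p y^A$ and $e_4 = \p/\p\tau$ preserve the $c(s^\ell)$-order, while $e_3 = \p/\p s$ reduces it by one, the coefficient of each $e_l$ is at worst $c(s^1)$. Inverting the relation in Lemma~\ref{l-basis}(ii) gives $e_l = s^{-1}c^{kl}(s^0)w_k$, so a $c(s^1)$-coefficient of $e_l$ becomes a $c(s^0)$-coefficient of $w_k$. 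This yields the first identity.

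For $[T,w_i]$ the same scheme applies, writing $T = B^a e_a$ with $B^i = c_i(s^1)$ for $i\le 3$ and $B^4 = \a^{-1}$. The only new subtlety is that $B^4$ is not of type $c(s^1)$ but merely smooth up to $s=0$. The cross-terms $B^4\,e_4(D_i^k)\,e_k$ remain $c(s^1)e_k$, because $e_4$ preserves the $s$-factor in $D_i^k$. The terms $-D_i^k\,e_k(B^4)\,e_4$ are still $c(s^1)e_4$: even in the worst case $k=3$, where $e_3(B^4) = \p_s\a^{-1}$ is only $c(s^0)$, the prefactor $D_i^3 = c(s^1)$ supplies the needed power of $s$. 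To pass back to the $w$-frame I would use $e_4 = \a w_4 - \a B^i e_i$ together with $e_i = s^{-1}c^{ia}(s^0)w_a$, which gives $e_4 = \a w_4 + c_a(s^0)w_a$. Converting every $c(s^1)$-coefficient of $e_a$ through these formulas produces the stated $[T,w_i] = \sum_{a=1}^{4}c_{ia}(s^0)w_a$.

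The main obstacle here is not conceptual but combinatorial: one has to check carefully that the sole non-$c(s^1)$ coefficient, $B^4 = \a^{-1}$, never spoils the vanishing order, and this is guaranteed because every derivative of $B^4$ that enters the computation appears multiplied by some $D_i^k$ of type $c(s^1)$, restoring the factor of $s$ that is needed for the inversion $e_l = s^{-1}c^{kl}(s^0)w_k$ to land back in $c(s^0)$-territory.
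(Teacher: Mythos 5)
Your proof is correct and takes essentially the same route as the paper's: both exploit that $[e_a,e_b]=0$ (the $e_a$ being $\Phi$-pushforwards of the coordinate frame on $M$), expand $w_i$ and $T$ in the $e$-frame, apply the Leibniz rule while tracking the $s$-order (in particular that $e_3=\p_s$ lowers the order by one and that $e_4(s)=0$), and convert back via $e_i=s^{-1}c^{ik}(s^0)w_k$ and $e_4=\a\lf(T-c_k(s)e_k\ri)$. Your bookkeeping of orders is a little more explicit, but the substance is identical.
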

\begin{proof} Observe that $e_a$ in \eqref{e-frame-1} are coordinate frames with respect to the coordinates $y^1, y^2,s,  \tau$. Hence $[e_a,e_b]=0$. Now by Lemma \ref{l-basis}
\bee
\begin{split}
[w_i,w_j]=&[s\ve_i,s\ve_j]\\
=&[sc_{ik}(s^0)e_k, sc_{jl}(s^0)e_l]\\
=&sc_{ik}(s^0)e_k(sc_{jl}(s^0))e_l-sc_{jl}(s^0)e_l(sc_{ik}(s^0))e_k\\
=&c_{ijk}(s)e_k\\
=&c_{ijk}(s^0)w_k.
\end{split}
\eee

By Lemma \ref{l-basis} again,   we have
\bee
\begin{split}
[T,w_i]=&[c_k(s)e_k+\a^{-1}e_4,s c_{ij}(s^0)e_j]\\
=&c_k(s)e_k+[\a^{-1}e_4, s c_{ij}(s^0)e_j]\\
=&c_{ia}(s^0)w_a
\end{split}
\eee
where we have used the fact that $e_4(s)=0$ and $e_4=\a(T-c_k(s)e_k)$.
\end{proof}

\begin{lma}\label{l-dT}
$||\nabla T||_{\Theta}$ is uniformly bounded in $U\times(0,s_0)\times(\tau_1,\tau_2)$.
\end{lma}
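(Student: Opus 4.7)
The plan is to control $\|\nabla T\|_\Theta$ by expanding $\nabla T$, viewed as a $(1,1)$-tensor, in the $\Theta$-orthonormal frame $\{w_1,w_2,w_3,T\}$. A direct check shows this frame is indeed $\Theta$-orthonormal: $\Theta(w_i,w_j)=\delta_{ij}$, $\Theta(w_i,T)=0$, and $\Theta(T,T)=g(T,T)+2=1$. Since $g(\nabla_X T,T)=\tfrac12 X(g(T,T))=0$ for every $X$, one has $\Theta(\nabla_X T,Y)=g(\nabla_X T,Y)$ for any $Y$ in the frame. Hence
\[
\|\nabla T\|_\Theta^2=\sum_{a,b=1}^4 g(\nabla_{e_a}T,e_b)^2,\qquad e_4:=T,\ e_i:=w_i,
\]
and the identity $g(\nabla_X T,T)=0$ kills every term with $e_b=T$. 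So it suffices to estimate $g(\nabla_{w_i}T,w_j)$ for $1\le i,j\le 3$ and $g(\nabla_T T,w_j)$ for $1\le j\le 3$.

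For the spatial-spatial block, since $w_i,w_j$ are tangent to $\Sigma_\tau$ and $T$ is its unit future timelike normal, differentiating $g(T,w_j)=0$ along $w_i$ gives
\[
g(\nabla_{w_i}T,w_j)=-g(T,\nabla_{w_i}w_j)=\mathcal{K}(w_i,w_j),
\]
which by Lemma \ref{l-2nd-est} equals $\tau^{-1}\delta_{ij}+c_{ij}(s)$ and is therefore uniformly bounded on $U\times(0,s_0)\times(\tau_1,\tau_2)$.

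For the acceleration $\nabla_T T$, note first that $g(\nabla_T T,T)=0$, so $\nabla_T T$ is tangent to $\Sigma_\tau$. Using $T=-\a\nabla\tau$ and the normalization $g(\nabla\tau,\nabla\tau)=-\a^{-2}$, I will carry out the standard computation: for $Y=w_j$,
\[
g(\nabla_T T,Y)=-\a g(\nabla_T\nabla\tau,Y)=-\a\,\mathrm{Hess}\,\tau(Y,T),
\]
and expanding $\mathrm{Hess}\,\tau(Y,T)=Y(g(\nabla\tau,T))-g(\nabla\tau,\nabla_YT)$ yields, using $g(\nabla\tau,T)=\a^{-1}$ and $g(\nabla\tau,\nabla_YT)=0$ (because $\nabla_Y T$ is tangent to $\Sigma_\tau$ while $\nabla\tau\perp\Sigma_\tau$), the clean identity
\[
g(\nabla_T T,w_j)=w_j(\log\a).
\]
Combined with Lemma \ref{l-lapse} (which bounds $\a,\a^{-1}$ and $\|\nabla\a\|_\Theta$), this shows $|g(\nabla_T T,w_j)|$ is uniformly bounded.

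Putting the two blocks together exhausts all nonzero components of $\nabla T$ in the $\Theta$-orthonormal frame, so $\|\nabla T\|_\Theta$ is uniformly bounded on $U\times(0,s_0)\times(\tau_1,\tau_2)$. The only mildly delicate step is the lapse identity $g(\nabla_T T,w_j)=w_j(\log\a)$; everything else reduces to invoking the previously established estimates for $\mathcal{K}$ and $\a$.
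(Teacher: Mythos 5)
Your proof is correct, and the spatial--spatial block, $g(\nabla_{w_i}T,w_j)=\mathcal{K}(w_i,w_j)$, is exactly the paper's argument. Where you diverge is in treating $\nabla_T T$: the paper differentiates $g(T,w_i)=0$ along $T$ to obtain $g(\nabla_T T,w_i)=-g(T,[T,w_i])$ and then invokes Lemma~\ref{l-bracket} to bound the bracket, whereas you instead derive the classical lapse identity $g(\nabla_T T,w_j)=w_j(\log\a)$ (equivalently, $\nabla_T T$ equals the tangential gradient of $\log\a$) and close using Lemma~\ref{l-lapse}. Both are legitimate: the paper's route keeps everything within the bracket machinery it has already set up (and needs again for $\nabla\nabla T$), while yours is conceptually tidier and makes transparent that only the lapse and second-fundamental-form bounds are really in play for the first covariant derivative of $T$. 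Your preliminary observations---that $\{w_1,w_2,w_3,T\}$ is $\Theta$-orthonormal and that $\Theta(\nabla_X T,\cdot)=g(\nabla_X T,\cdot)$ on the frame---are accurate and make the reduction to the two blocks explicit; the paper leaves this implicit. One small point of rigor: when you expand $\nabla_T T=\nabla_T(-\a\nabla\tau)$ you should note that the term $-T(\a)\,\nabla\tau$ drops out when paired with $w_j$ because $g(\nabla\tau,w_j)=0$; you assert the clean reduction to $-\a\,\mathrm{Hess}\,\tau(w_j,T)$ without flagging this, though the step is correct.
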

\begin{proof} Let $w_i$ be as in \eqref{e-w}.  To estimate $||\nabla T||_{\Theta}$ it is sufficient to estimate $||  \nabla_{w_i}T||_{\Theta}$ and $||\nabla_TT||_{\Theta}$.
Now
$$
g(\nabla_{w_i}T, T)=0;\ \
g(\nabla_{w_i}T, w_j)=\mathcal{K}(w_i,w_j).
$$
By Lemma \ref{l-2nd-est}, $||  \nabla_{w_i}T||_{\Theta}$ are uniformly bounded for $1\le i\le 3$.

Next, we want to estimate $||  \nabla_TT||_{\Theta}$.  It is easy to see that
$
g(\nabla_TT,T)=0.
$
 Since $g(T,w_i)=0$, we have
\bee
\begin{split}
g (\nabla_TT,w_i)=&-g(T,\nabla_Tw_i)\\
=&-g(T,[T,w_i])+g(T,\nabla_{w_i}T)\\
=&-g(T,[T,w_i]).
\end{split}
\eee
By Lemma \ref{l-bracket}, we conclude that $||\nabla_TT||_{\Theta}$ is uniformly bounded.  This completes the proof of the lemma.

\end{proof}
For $ \nabla\nabla T$, we have:

\begin{lma}\label{l-ddT}  $||\nabla\nabla T||_{\Theta}$ is uniformly bounded in $U\times(0,s_0)\times(\tau_1,\tau_2)$.
\end{lma}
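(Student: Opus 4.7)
The plan is to mirror the proof of Lemma \ref{l-dT} with one additional derivative, again working in the orthonormal frame $\{w_1,w_2,w_3,w_4:=T\}$, which is orthonormal with respect to $\Theta$ as well. The second covariant derivative is the tensor $(\nabla^2 T)(X,Y)=\nabla_X(\nabla_Y T)-\nabla_{\nabla_X Y}T$, so it suffices to bound every component $g((\nabla^2 T)(X,Y),Z)$ with $X,Y,Z$ in the frame uniformly on $U\times(0,s_0)\times(\tau_1,\tau_2)$.

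The first step I would carry out is to consolidate the previous lemmas into a single \emph{boundedness principle}: every connection coefficient $g(\nabla_X Y,Z)$ with $X,Y,Z\in\{w_1,w_2,w_3,T\}$ is uniformly bounded, and in fact is of the form $\tau^{-1}\delta_{ij}$ plus a quantity of type $c(s^0)$ on $\ol U\times[0,s_0]\times[\tau_1,\tau_2]$. Koszul's formula together with Lemma \ref{l-bracket} handles the purely tangential case; Lemma \ref{l-2nd-est} handles the case $Z=T$; the $\nabla_T T$ case was already settled in the proof of Lemma \ref{l-dT}. The second and crucial observation is structural: the operators $w_i=s\ve_i$ and $T=c_k(s)e_k+\alpha^{-1}e_4$ each send functions of type $c(s^\ell)$ (with $\ell\ge 0$) back into $c(s^\ell)$. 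The drop in $s$-order caused by an $e_3=\p_s$ derivative is exactly compensated either by the explicit factor of $s$ in $w_i=s\ve_i$ or by the $c(s)$ factor in front of $e_k$ in $T$; while $\alpha^{-1}e_4$ reduces to a bounded $\tau$-derivative on the coordinate domain and is harmless.

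Combining these two inputs will finish the estimate. Expanding $\nabla_Y T=A^a_Y w_a$, each coefficient $A^a_Y$ is either $\tau^{-1}+c(s^0)$ or $c(s^0)$ by the principle, and
\bee
\nabla_X\nabla_Y T=X(A^a_Y)\,w_a+A^a_Y\,\nabla_X w_a,
\eee
with the first summand bounded by the structural observation and the second a product of two bounded quantities. Likewise $\nabla_{\nabla_X Y}T=\Gamma^a_{XY}\nabla_{w_a}T$ is bounded, since the frame components $\Gamma^a_{XY}$ of $\nabla_X Y$ are bounded and Lemma \ref{l-dT} takes care of $\nabla_{w_a}T$. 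Projecting onto $w_k$ then yields a uniform bound on each $g((\nabla^2 T)(X,Y),w_k)$, and hence on $||\nabla\nabla T||_\Theta$.

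The main obstacle I expect is organizational rather than analytic: one has to check carefully that every $\p_s$-derivative arising in the expansions is paired with a compensating factor of $s$ coming from $w_i=s\ve_i$ or from the coefficient $c(s)$ in $T$, so that no $s^{-1}$ singularity creeps in as $s\to 0$. This amounts to verifying that the foliation built in Section \ref{s-prelim} is smooth across the conformal boundary $s=0$, which is precisely the property that the choice of $\phi,\psi$ in \eqref{e-phi-psi} was designed to guarantee.
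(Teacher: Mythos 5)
Your proposal is correct and follows essentially the same route as the paper: it rests on the same inputs (the bracket estimates, the second fundamental form estimate $\cK(w_i,w_j)=\tau^{-1}\delta_{ij}+c(s)$, the formula $g(\nabla_TT,w_i)=-g(T,[T,w_i])$, and the fact that $T=c_k(s)e_k+\a^{-1}e_4$ and $w_i=s\ve_i$ send $c(s^\ell)$ to $c(s^\ell)$), and the paper's proof is exactly the case-by-case ($\nabla_T\nabla_T T$, $\nabla_{w_i}\nabla_T T$, $\nabla_T\nabla_{w_i}T$, $\nabla_{w_i}\nabla_{w_j}T$) unwinding of the single computation you describe. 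Your packaging via a "boundedness principle" for all connection coefficients in the $\Theta$-orthonormal frame plus the $c(s^\ell)$-preservation property of $w_i$ and $T$ is a cleaner way to present the identical argument, and avoids repeating the Koszul/bracket manipulations four times.
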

\begin{proof} It is sufficient to prove that for all $1\le a, b\le 4$,
$||\nabla_{w_a}\nabla_{w_b}T||_{\Theta}$ is uniformly bounded. Here $w_4=T$.

(i) \underline{To estimate $||\nabla_T\nabla_TT||_{\Theta}$}:

$$
g (\nabla_T\nabla_T T, T)= -g(\nabla_TT,\nabla_TT),
$$
which is uniformly bounded by Lemma \ref{l-dT}.  On the other hand,

\bee
\begin{split}
g  (\nabla_T\nabla_T T, w_i\ra=&   T(g(\nabla_TT,w_i))-g( \nabla_TT,\nabla_Tw_i) \\
=&T(g(\nabla_TT,w_i))-g(\nabla_TT,[T,w_i])-g(\nabla_TT,\nabla_{w_i}T).
\end{split}
\eee
By Lemmas \ref{l-dT}, \ref{l-bracket}, the last two terms above are uniformly bounded. By Lemma \ref{l-bracket}
\be\label{e-ddT-1}
\begin{split}
T(g(\nabla_TT,w_i))=&-T(g( T,[T,w_i]))\\
=&T(c_i(s^0))\\
=&c_i(s^0),
\end{split}
\ee
by Lemma \ref{l-basis}(iii). Hence $||\nabla_T\nabla_TT||_\Theta$ is uniformly bounded.

\vskip .2cm

(ii) \underline{To estimate $||\nabla_{w_i}\nabla_TT||_{\Theta}$}:

$$
g(\nabla_{w_i}\nabla_TT,T)=-g(\nabla_TT,\nabla_{w_i}T),
$$
which is uniformly bounded by  Lemma \ref{l-dT}.
Next,
$$
g(\nabla_{w_i}\nabla_TT,w_j) =w_i(g( \nabla_TT,w_j))-g( \nabla_TT,\nabla_{w_i}w_j).
$$
The first term  on the RHS is uniformly bounded similar to \eqref{e-ddT-1}.
Consider the second term, we have
\bee
\begin{split}
g(\nabla_TT,\nabla_{w_i}w_j)=&g (\nabla_TT, w_k)\cdot g(\nabla_{w_i}w_j, w_k).
\end{split}
\eee
Now
\be\label{e-bracket}
g(\nabla_{w_i}w_j, w_k)=\frac12\lf(g( [w_i, w_j],w_k)-g([w_i, w_k], w_j)-g([w_j,w_k], w_i)\ri).
\ee
Hence by Lemma \ref{l-dT} and \ref{l-bracket}, the second term on the RHS is also uniformly bounded. So $||\nabla_{w_i}\nabla_T T||_\Theta$ is uniformly bounded.

\vskip .2cm

(iii) \underline{To estimate
$||\nabla_T\nabla_{w_i}T||_{\Theta}$}:

\bee
\begin{split}
g(  \nabla_T\nabla_{w_i}T,T) =&-g( \nabla_{w_i}T,\nabla_TT)\\
=&-g( \nabla_{w_i}T, w_j)\cdot g( \nabla_TT,w_j)\\
=&-\mathcal{K}(w_i,w_j)g(\nabla_TT,w_j),\\
=&c(s^0)
\end{split}
\eee
which is uniformly bounded by Lemmas \ref{l-2nd-est} and  \ref{l-dT}. Next,

\bee
\begin{split}
g(  \nabla_T\nabla_{w_i}T,w_j)=&T( g(\nabla_{w_i}T,w_j))-g(\nabla_{w_i}T,\nabla_Tw_j)\\
=&T(\cK(w_i,w_j))-g(\nabla_{w_i}T,w_k))\cdot g(\nabla_Tw_i,w_k)\\
=&T(\cK(w_i,w_j))-\cK(w_i,w_k)\lf(g([T,w_i],w_k)-\cK(w_i,w_k)\ri),
\end{split}
\eee
which is uniformly  bounded by Lemmas \ref{l-basis}, \ref{l-2nd-est}, and \ref{l-bracket}.

Hence $||\nabla_T\nabla_{w_i}T||_\Theta$ is uniformly bounded.

\vskip .2cm

(iv) \underline{To estimate $||\nabla_{w_i}\nabla_{w_j}T||_\Theta$}:

$$
g( \nabla_{w_i}\nabla_{w_j}T, T)=-g( \nabla_{w_j}T, \nabla_{w_i}T),
$$
which is uniformly bounded by  Lemma \ref{l-2nd-est}.

\bee
\begin{split}
g( \nabla_{w_i}\nabla_{w_j}T, w_k\ra=&w_i(g(  \nabla_{w_j}T,w_k))-g  (\nabla_{w_j}T,\nabla_{w_i}w_k)\\
=&w_i(\cK (w_j,w_k))-\cK(w_j,w_l)\cdot g( \nabla_{w_i}w_k,w_l).
\end{split}
\eee
As before, one can see that this is uniformly bounded. This completes the proof of the lemma.

\end{proof}

Finally, we want to estimate $||\mathbf{H}_{\tau,s}||_\Theta$, where
$\mathbf{H}_{\tau,s}$  is the surface given by $\tau$=constant, $s$=constant.
\begin{lma}\label{l-meancurvvec} $||  \mathbf{H}_{\tau,s}||_\Theta$ is uniformly bounded for $s\in(0,s_0)$, $\tau\in(\tau_1,\tau_2)$ and $\by\in U$.
\end{lma}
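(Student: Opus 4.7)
The plan is to decompose the mean curvature vector $\mathbf{H}_{\tau, s}$ of the 2-surface $\{\tau=\text{const},\ s=\text{const}\}$ along an orthonormal basis of its rank-two normal bundle in $(M, \gsch)$, and bound each coefficient using estimates already proved. I would first identify the frame: the tangent space of the 2-surface is spanned by $e_1, e_2$ of \eqref{e-frame-1}, and since the Gram-Schmidt ordering $e_1, e_2, e_3$ in Lemma \ref{l-basis}(ii) keeps $\ve_1, \ve_2$ inside the same 2-plane, the vectors $w_1 = s\ve_1$, $w_2 = s\ve_2$ from \eqref{e-w} form an orthonormal frame tangent to the 2-surface with respect to $g$, while the rank-two normal bundle is spanned by the unit spacelike $w_3 = s\ve_3$ (tangent to $\Sigma_\tau$) and the future-directed unit timelike $T$.

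Then I would expand $\mathbf{H}_{\tau, s} = \sum_{A=1}^2 (\nabla_{w_A} w_A)^\perp$. Using $g(T,T) = -1$ and $g(w_3, w_3) = 1$, the projection onto the normal bundle reads
\bee
(\nabla_{w_A} w_A)^\perp = -g(\nabla_{w_A} w_A, T)\,T + g(\nabla_{w_A} w_A, w_3)\,w_3.
\eee
Differentiating $g(w_A, T) = 0$ along $w_A$ gives $g(\nabla_{w_A} w_A, T) = -g(w_A, \nabla_{w_A} T) = -\cK(w_A, w_A)$, which is uniformly bounded by Lemma \ref{l-2nd-est}. For the $w_3$-term, differentiating $g(w_A, w_3) = 0$ and using $\nabla_{w_A} w_3 = \nabla_{w_3} w_A + [w_A, w_3]$ together with $g(\nabla_{w_3} w_A, w_A) = \tfrac12 w_3(g(w_A, w_A)) = 0$ yields
\bee
g(\nabla_{w_A} w_A, w_3) = -g(w_A, \nabla_{w_A} w_3) = -g([w_A, w_3], w_A),
\eee
which is uniformly bounded by Lemma \ref{l-bracket}.

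Finally, since $\mathbf{H}_{\tau, s}$ lies in the span of $\{T, w_3\}$ and, by the definition \eqref{e-Riemannian} of $\Theta$, $\|V\|_\Theta^2 = \sum_{i=1}^{3} g(V, w_i)^2 + g(V, T)^2$ for any $V$, the uniform bounds on the two coefficients immediately give uniform boundedness of $\|\mathbf{H}_{\tau, s}\|_\Theta$. The only real obstacle here is bookkeeping of signs and keeping straight which normal direction carries which piece of geometry (the extrinsic $\cK$ of $\Sigma_\tau$ in the $T$-direction versus the spatial commutator $[w_A, w_3]$ in the $w_3$-direction); once the decomposition is set up, both bounds follow directly from the preceding lemmas.
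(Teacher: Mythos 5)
Your proof is correct and follows essentially the same route as the paper: both decompose $\mathbf{H}_{\tau,s}$ in the normal frame $\{T, w_3\}$, identify the $T$-coefficient as $\sum_A \cK(w_A,w_A)$ via Lemma \ref{l-2nd-est}, and bound the $w_3$-coefficient via the commutator estimates of Lemma \ref{l-bracket}. The only cosmetic difference is that the paper invokes the general Koszul-type identity \eqref{e-bracket} to reduce $g(\nabla_{w_A}w_A,w_3)$ to brackets, whereas you carry out that reduction directly for the specific indices involved.
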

\begin{proof} Let $N\subset \Sigma_\tau$ which is the level set of $s$. Let $e_a$, $\ve_i$,  $w_i$ be as in \eqref{e-frame-1}, Lemma \ref{l-basis}, and \eqref{e-w}. Observe that $e_1, e_2$ form a basis for the tangent space of $N$, and $\ve_1, \ve_2, \ve_3$ form an orthonormal basis for $\Sigma_\tau$ obtained by Gram-Schmidt process on $e_1, e_2, e_3$ with respect to $\ol g$. Hence $w_1, w_2$ form an orthonormal basis for the tangent space of $N$. $w_3, T$ form an orthonormal basis for the normal bundle of $N$.

\bee
\begin{split}
\mathbf{ H}_{\tau,s}=&\lf(\sum_{A=1}^2  \nabla_{w_A}w_A\ri)^\perp\\
=&- \sum_{A=1}^2  g(\nabla_{w_A}w_A,T) T+  \sum_{A=1}^2  g(\nabla_{w_A}w_A, w_3) w_3\\
=&\sum_{A=1}^2\cK(w_A,w_A)T+\sum_{A=1}^2  g(\nabla_{w_A}w_A, w_3) w_3.
\end{split}
\eee
By Lemmas \ref{l-2nd-est}, \ref{l-bracket} and \eqref{e-bracket}, we conclude that the lemma is true.

\end{proof}

Since $\mS^2$ can be covered by finitely many coordinate neighborhoods, by Lemmas \ref{l-foliation-1}, \ref{l-lapse}, \ref{l-cK}, \ref{l-dT}, \ref{l-ddT} and \ref{l-meancurvvec}, we have the following:
\begin{thm}\label{t-est} There is $s_0>0$ depending only on $\tau_1, \tau_2,   f$ such that for any $\tau\in (\tau_1,\tau_2)$ the level set $\Sigma_\tau$ is spacelike. Moreover, if $\a$ is the lapse   of the time function $\tau$, $T$ is the future pointing unit normal of $\Sigma_\tau$  and $\vec H_{\tau,s}$ is the mean curvature vector of the surface $\tau=$constant, $s=$constant, then the following are all uniformly bounded in $\mS^2\times(0,s_0)\times(\tau_1,\tau_2)$:
\bee
\a, \a^{-1}, ||\nabla \a||_\Theta, ||\nabla T||_{\Theta}, ||\nabla\nabla T||_\Theta, ||\vec H_{\tau,s}||_\Theta.
\eee
Moreover, the mean curvature $H$ of $\Sigma_\tau$ is given by
$$
H=\tau^{-1}+c(s).
$$

\end{thm}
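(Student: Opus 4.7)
The plan is to assemble the estimates already established in Lemmas \ref{l-foliation-1} through \ref{l-meancurvvec} by a straightforward finite-covering argument; Theorem \ref{t-est} is essentially a global summary of local bounds, so the proof is bookkeeping rather than new analysis.

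First I would cover the compact sphere $\mS^2$ by a finite collection of coordinate neighborhoods $U_1,\dots,U_N$ chosen so that on each $\overline{U_k}$ the coefficients $\sigma_{AB}$ of the round metric are smooth up to the boundary and the eigenvalues of $(\sigma_{AB})$ are uniformly bounded below by some $\lambda_k>0$. This places each $U_k$ in the regime required by Lemmas \ref{l-foliation-1}, \ref{l-lapse}, \ref{l-dT}, \ref{l-ddT}, and \ref{l-meancurvvec}, each of which produces an $s_0^{(k)}>0$ depending only on $\tau_1,\tau_2,f$ (and $\lambda_k$) such that $\Sigma_\tau$ is spacelike on $U_k\times(0,s_0^{(k)})\times(\tau_1,\tau_2)$ and each of $\alpha,\ \alpha^{-1},\ \|\nabla\alpha\|_\Theta,\ \|\nabla T\|_\Theta,\ \|\nabla\nabla T\|_\Theta,\ \|\vec H_{\tau,s}\|_\Theta$ is uniformly bounded there. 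Setting $s_0=\min_k s_0^{(k)}>0$ globalizes the conclusion to $\mS^2\times(0,s_0)\times(\tau_1,\tau_2)$; no compatibility issue arises across chart overlaps because each of the listed quantities is intrinsic (coordinate-invariant).

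For the asymptotic formula $H=\tau^{-1}+c(s)$ for the mean curvature of $\Sigma_\tau$, I would start from Lemma \ref{l-2nd-est}, which gives $\cK(w_i,w_j)=\tau^{-1}\delta_{ij}+c_{ij}(s)$ in the orthonormal frame $\{w_1,w_2,w_3\}$ of $\Sigma_\tau$. Tracing in the paper's normalization yields $H=\tau^{-1}+c(s)$, with the remainder term controlled uniformly on each chart by the definition of the symbol $c(s)$, and hence uniformly on all of $\mS^2$ after passing to the minimum over the finite cover.

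The main obstacle in the section is not this final assembly step but the earlier verification that $\|\nabla T\|_\Theta$ and $\|\nabla\nabla T\|_\Theta$ are controlled in the Riemannian norm $\Theta=g_{\mathrm{Sch}}+2\omega\otimes\omega$, where cancellations available in a Lorentzian norm are absent. That control rests on Lemma \ref{l-bracket}, which converts commutators of the $w_i$ and of $T$ with $w_i$ into bounded linear combinations of $\{w_1,w_2,w_3,T\}$, so that iterated covariant derivatives of $T$ along the frame reduce to expressions in $\cK$, $\nabla T$, and commutators, each of which has already been bounded. Once those lemmas are in hand, Theorem \ref{t-est} follows immediately.
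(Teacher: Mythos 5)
Your proposal is correct and matches the paper's own (very brief) argument: the paper likewise just covers $\mS^2$ by finitely many charts and invokes Lemmas \ref{l-foliation-1}, \ref{l-lapse}, \ref{l-cK}, \ref{l-dT}, \ref{l-ddT}, \ref{l-meancurvvec} (with the mean-curvature formula coming from tracing Lemma \ref{l-2nd-est} in the orthonormal frame $\{w_i\}$). Your added remarks on chart compatibility via coordinate invariance and on the role of Lemma \ref{l-bracket} are accurate but not additional steps beyond what the paper does.
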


\section{Construction of CMC surfaces}\label{s-cmc surfaces}

Using $t, \mathbf{x}=(x^1,x^2,x^3)$ as coordinates for the Schwarzschild metric in the form \eqref{e-Sch-metric} with $r=|\mathbf{x}|=\lf(\sum_{i=1}^3(x^i)^2\ri)^\frac12$. In this coordinates,

\be\label{e-Sch-2}
g_{\mathrm{Sch}}=-h dt^2+g_{ij}(x) dx^idx^j,
\ee
where  $h=1-\frac{2m}{r}=1-2ms$ with $s=r^{-1}$ and
\bee
g_{ij}=\delta_{ij}+(h^{-1}-1)r^{-2}x^ix^j.
\eee
\vskip .2cm

{\it Notation}:
In this section, we use $\gsch$ to denote the Schwarzschild metric and $g_{ij}(x)dx^idx^j$ will be denoted by $g$. The inverse of $(g_{ij})$ is denoted by $g^{ij}$. For a function $v$ of $\bx=(x^1,x^2,x^3)$, $Dv=g^{ij}v_i\frac{\p}{\p x^j}$ where $v_i=\frac{\p v}{\p x^i}$. $D^iv=g^{ij}v_j$. $|Dv|^2= D^i vD_iv$. The Hessian of $v$
with respect to $g$ will be denoted by $v_{ij}$.
\vskip .2cm

We will prove Theorem \ref{t-main} for the case $H_0=1$.  The other case is similar. Let $f(\by)$ be a smooth function on $\mS^2$. Let
$$
M=\mS^2\times(0,s_0)\times (\frac12, 2),
$$
and define the map $\Phi$ as in $\S$\ref{ss-foliation} given by $P(\by, s,\tau)$ in \eqref{e-foliation-1} with $f$ replaced by $-f$. Let $s_0$ be as in Lemma \ref{l-parametriztaion}. Let $\b$ be a constant, define

$$
Q(\by, s;\b)=-f(\by)+\phi(\by)s+\frac12 \psi(\by)s^2+\b s^3=P(\by,s,1)+\b s^3,
$$
where $\phi, \psi$ are defined as in \eqref{e-phi-psi} with $f$ replaced by $-f$ and $\tau=1$.
  For fixed $\b$, if $s_0$ is small enough then $P(\by, s,2)<Q(\by, s;\b)<P(\by, s,\frac12)$. Hence the  surface $\Sigma$  given by  $v=-Q(\by,s;\b)$    will be in $\Phi(M)$, provided $s_0$ is small enough depending only on $f$ and the bound of $\b$.

 We want to compute the mean curvature of $\Sigma$. All mean curvature will be computed with respect to future pointing unit normal.
\begin{lma}\label{l-barrier} There exist $\b_1<0, \b_2>0$ depending only on $f$ such that the surfaces $\Sigma_1$, $\Sigma_2$ which are the graphs of $v=-Q_1, v= -Q_2$ respectively, and are in $\Phi(M)$ provided $s_0>0$ small enough depending only on $f$. Here  $Q_1(\by,s)=:Q(\by,s;\b_1)$, $Q_2(\by,s)=:Q(\by,s;\b_1)$.  Moreover, the mean curvature of $\Sigma_1$  is smaller than 1 and the mean curvature of $\Sigma_2$ is larger than 1.

\end{lma}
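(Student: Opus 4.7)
The surface $\Sigma_\beta := \{v=-Q(\by,s;\beta)\}$ differs from the leaf $\Sigma_1$ of the foliation built in \S\ref{ss-foliation} (corresponding to $\tau=1$; recall that in this section $P$ is defined with $f$ replaced by $-f$) only through the cubic term $\beta s^3$ added to the graphing function. By Theorem \ref{t-est}, and more precisely by the design of $\phi,\psi$ in \eqref{e-phi-psi} (motivated by \cite[Theorem 3.1]{LST}, which ensures both $H|_{s=0}=\tau^{-1}$ and $\p_s H|_{s=0}=0$), one has the refined expansion $H(\Sigma_1) = 1 + O(s^2)$ uniformly in $\by\in\mS^2$. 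The plan is to compute the leading $\beta$-dependent correction to the mean curvature of $\Sigma_\beta$, which we expect to be of order $\beta s^2$ with definite sign, and then choose $|\beta|$ large enough to dominate the $O(s^2)$ error.

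View $\Sigma_\beta$ as the zero set of $F(\by,s,v)=v+P(\by,s,1)+\beta s^3$. Its mean curvature is given by the (signed) divergence of the future unit normal $-\nabla F/\sqrt{-\gsch(\nabla F,\nabla F)}$. Because $\gsch$ degenerates at $\cI^+$, the cleanest route is to work in the unphysical metric $\ol g$ from \eqref{e-unphysical-1} (regular up to $s=0$) and convert via the conformal identity $H = s\,\ol H - 3\,\ol\bn(s)$ implied by Lemma \ref{l-2nd-conformal} (up to an orientation sign). The perturbation enters only through $F_s = P_s + 3\beta s^2$ and $F_{ss} = P_{ss} + 6\beta s$; expanding $\ol g(\ol\nabla F,\ol\nabla F)$ and $\ol H$ in powers of $s$ using \eqref{e-dP} and \eqref{e-phi-psi}, a direct Taylor computation produces an identity of the form
\[
H(\Sigma_\beta) - H(\Sigma_1) = c_0\,\beta\,s^2 + O(|\beta|\,s^3),
\]
with an explicit nonzero constant $c_0$ (or, at worst, a smooth nowhere-vanishing function of $\by$ of fixed sign).

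Combining the two expansions, $H(\Sigma_\beta) - 1 = c_0\,\beta\, s^2 + R(\by,s,\beta)$ with $|R| \le C_0(f)\,s^2 + C_1(f)\,|\beta|\, s^3$. Assuming $c_0>0$ (the opposite case is symmetric), choose $\beta_2 > 2C_0/c_0$ and $\beta_1 < -2C_0/c_0$, then shrink $s_0$ so that $C_1|\beta_i|\,s_0 < |c_0\beta_i|/2$; this produces $H(\Sigma_1) < 1 < H(\Sigma_2)$ throughout $\mS^2\times(0,s_0)$. The inclusion $\Sigma_1, \Sigma_2 \subset \Phi(M)$ follows from \eqref{e-dP}, which gives $P(\by,s,1/2)-P(\by,s,1)$ and $P(\by,s,2)-P(\by,s,1)$ both of order $s$, eventually dominating the $O(s^3)$ perturbation once $s_0$ is shrunk further depending on $|\beta_1|, |\beta_2|$ and $f$.

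\textbf{Main obstacle.} The central work is the explicit Taylor expansion producing the coefficient $c_0$ and the verification of its sign. The computation must be performed in the unphysical metric $\ol g$ (where $s=0$ is regular) and then translated back via Lemma \ref{l-2nd-conformal}; attempting this directly in the $(t,\bx)$-coordinates is plagued by the degenerating factor $\sqrt{h - h^2|Du|^2} \sim s$ appearing in the graph formula for mean curvature, which obscures the relevant cancellations. Once $c_0$ is identified, choosing the barriers $\beta_1,\beta_2$ is a matter of order-of-magnitude comparison.
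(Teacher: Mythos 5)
Your plan is the same as the paper's in substance: expand the mean curvature of $\Sigma_\beta$ in powers of $s$, observe that the $\phi,\psi$ choice from \eqref{e-phi-psi} already kills the zeroth and first order terms so that $H(\Sigma_1)=1+O(s^2)$, extract the $\beta$-dependent coefficient in the $s^2$-term, and finally choose $\beta$ large in magnitude to dominate the $\beta$-independent $O(s^2)$ error. You are also right that the computation should be done on the unphysical side; the paper's formula \eqref{e-CMC-null-1}, imported from \cite[Lemma 2.2]{LST}, is exactly the outcome of pushing $H=s\ol H - 3\ol\bn(s)$ (your conformal identity from Lemma \ref{l-2nd-conformal}) through to an explicit PDE in $Q$ and $L$.

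The genuine gap is that you never produce the coefficient $c_0$ or verify its sign and nonvanishing, and this is not a routine detail you can defer. It is the whole content of the lemma, and it is not obvious a priori: the paper's Remark \ref{r-beta} shows that if one adds $\beta s^{k+1}$ after matching to order $k-1$, then $3\,\p_s^k H|_{s=0}=(3-k)(k+1)!\,\beta+c$, so the coefficient vanishes at $k=3$ and changes sign for $k\ge 4$. So the conclusion lives or dies on landing in the favourable range. Your hedge that $c_0$ might only be "a nowhere-vanishing function of $\by$" is also off-target: the paper's explicit second differentiation of \eqref{e-CMC-null-1} at $s=0$ (using $Q_{sss}|_{s=0}=6\beta$, $L=1$, $L_{ss}|_{s=0}=-12\beta+c$) gives the clean constant $H_{ss}=2\beta+c$, i.e. $c_0=1$ in your normalization, uniformly over $\mS^2$. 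To complete the proof you must either reproduce this differentiation or cite a formula equivalent to \eqref{e-CMC-null-1}; the outline alone does not establish the sign you need. The remaining point you do address — that $\Sigma_1,\Sigma_2\subset\Phi(M)$ for $s_0$ small, because $P(\by,s,1/2)-P(\by,s,1)$ and $P(\by,s,2)-P(\by,s,1)$ are of size comparable to $s$ and dominate $\beta s^3$ — is correct and matches the paper.
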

\begin{proof} For notation simplicity, we denote $Q_1$ by $Q$ and $\Sigma_1$ by $\Sigma$. We may assume that $\Sigma$ is spacelike. Let $H$ be the mean curvature of $\Sigma$. By the computation in \cite[Lemma 2.2]{LST},

\be\label{e-CMC-null-1}
\begin{split}
-3  H L^\frac32=&sL \lf( s^2(1-2ms)Q_{ss}+\wt \Delta Q\ri)-\frac12 s\lf(L_s+s^2(1-2ms)L_sQ_s+\la\wn L,\wn Q\ra\ri)\\
&-s^2LP_s-3L
\end{split}
\ee
where
\bee
L=- (2Q_s+s^2(1-2ms)Q_s^2+|\wt\nabla Q|^2 ).
\eee
One can see that $H$ is  smooth up to $s=0$, provided $s_0>0$ is small enough depending only on $f$. By the choice of $\phi, \psi$ and \cite[Theorem 3.1]{LST}, at $s=0$ $H=1, H_s=0$.

In below, $c, c_k$ will denote   smooth functions in $\by, s$ up to $s=0$, which is independent of $\b$, it may vary from line to line. It is easy to see that
at $s=0$, $Q_s=c, Q_{ss}=c, Q_{sss}=6\b$. At $s=0$, $L=1$, $L_s=c$, $L_{ss}=-12\b+c.$
Therefore, at $s=0$
\bee
\begin{split}
(-3HL^\frac32)_{ss}=&-3H_{ss}L^\frac32-3H(L^\frac32)_{ss}\\
=&-3H_{ss} -  \frac92 (L^\frac12 L_s)_s \\
=&-3H_{ss} +54\b+c;\\
\end{split}
\eee

\bee
\begin{split}
 \lf[sL \lf( s^2(1-2ms)Q_{ss}+\wt \Delta Q\ri)\ri]_{ss}
=&2\lf[L \lf( s^2(1-2ms)Q_{ss}+\wt \Delta Q\ri)\ri]_{s}\\
=&c;
\end{split}
\eee

\bee
\begin{split}
 -\frac12 &\lf[s\lf(L_s+s^2(1-2ms)L_sQ_s+\la\wn L,\wn Q\ra\ri)\ri]_{ss}\\
=&- \lf(L_s+s^2(1-2ms)L_sQ_s+\la\wn L,\wn Q\ra\ri)_s\\
=&12\b+c;
\end{split}
\eee
and
\bee
\begin{split}
(-s^2LQ_s-3L)_{ss}=&36\b+c.
\end{split}
\eee
Hence we have
\bee
\begin{split}
-3H_{ss} +54\b+c_1=&c_2+12\b+c_3+36\b +c_4.
\end{split}
\eee
Or
\be\label{e-Hss}
 H_{ss}=2\b+c.
\ee
First choose $\b=\b_1<0$ so that $2\b_1+c<0$. Then $\b_1$ depends only on $f$.
\be
H =1+\frac12 ( 2\b_1+c)s^2+O(s^3).
\ee
In particular, $H <1$ for $0<s<s_0$ provided $s_0$ is small enough depending only on $f$. Similarly, one can choose $\b_2>0$ so that $2\b_2+c>0$. This completes the proof of the lemma.
\end{proof}

\begin{rem}\label{r-beta}
The construction in the above lemma does not work for higher order. Namely, suppose
\be\label{e-beta}
Q(\by, s)=\sum_{i=0}^k \frac1{i!}f_i(\by) s^i+\b s^{k+1},
\ee
and suppose we can choose $f_i$ so that the mean curvature $H$ satisfies $H=1$, and $\frac{\p^i H}{\p s^i}=0$  for $1\le i\le k-1 $ at $s=0$. Then at $s=0$
\bee
3 \p_s^k H=(3-k)(k+1)!\b+c
\eee
where $c$ is a function of $\by, s$. Note that $(3-k)(k+1)!\le 0$ if $k\ge 3$ in contrast to \eqref{e-Hss}. Another issue is that    in general one cannot find $f_i$ so that $\p_s^iH=0$ at $s=0$ if $k\ge 4$, see \cite[Theorem 3.1]{LST}.
\end{rem}

Let  $t, \bx$ be as in \eqref{e-Sch-2}.
\bee
\nabla t=-h^{-1}\frac{\p}{\p t}.
\eee
Lapse $\wt\a$ for the time function $t$ is given by:
\bee
\wt\a^{-2} =-\gsch(\nabla t,\nabla t)=h^{-1}.
\eee
So $\wt\a=h^{\frac12}.$
The future pointing unit normal of $t=$constant is:
\bee
\wt T=h^{-\frac12}\frac{\p}{\p t}.
\eee
\begin{lma}\label{l-t-tau}
Let $T$ be the future pointing unit normal of $\tau$=constant. Then
$\gsch(T,\wt T)=-s^{-1}L^{-\frac12} h^{-\frac12},$
where $L$ is given by \eqref{e-L} with $s=r^{-1}$.
\end{lma}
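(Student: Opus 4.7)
The proof is a direct computation chaining together identities already established in the paper. The plan is to expand $g_{\mathrm{Sch}}(T,\wt T)$ by plugging in the definitions of $T$ and $\wt T$, then use the formula for $g_{\mathrm{Sch}}(\nabla\tau,\partial/\partial t)$ derived in \eqref{e-tau-t} and the formula for the lapse $\alpha$ from \eqref{e-lapse}.

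First I would observe that in the $(\by,s,v)$ coordinates one has $\partial/\partial t = \partial_v$ (since $v = t - r_*$ and $r_*$ is independent of $t$), so $\wt T = h^{-1/2}\partial/\partial t = h^{-1/2}\partial_v$. Combining with $T = -\alpha\nabla\tau$ gives
\begin{equation*}
g_{\mathrm{Sch}}(T,\wt T) \;=\; -\alpha h^{-1/2}\, g_{\mathrm{Sch}}(\nabla\tau,\partial/\partial t) \;=\; -\alpha h^{-1/2}\,(\partial/\partial t)(\tau).
\end{equation*}

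Next I would substitute the identity \eqref{e-tau-t}, namely $g_{\mathrm{Sch}}(\nabla\tau,\partial/\partial t) = -P_\tau^{-1}$, and the lapse formula \eqref{e-lapse}, which gives $\alpha = s^{-1}|P_\tau| L^{-1/2}$. Since $s_0$ has been chosen so that $P_\tau < 0$ (Lemma \ref{l-parametriztaion}), this becomes $\alpha = -s^{-1} P_\tau L^{-1/2}$. Putting it together,
\begin{equation*}
g_{\mathrm{Sch}}(T,\wt T) \;=\; \bigl(-\alpha h^{-1/2}\bigr)\bigl(-P_\tau^{-1}\bigr) \;=\; \alpha P_\tau^{-1} h^{-1/2} \;=\; \bigl(-s^{-1}P_\tau L^{-1/2}\bigr) P_\tau^{-1} h^{-1/2} \;=\; -s^{-1} L^{-1/2} h^{-1/2},
\end{equation*}
as claimed.

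There is no real obstacle here: all the ingredients (the expression for $T$ in terms of $\nabla\tau$, the formula for $\alpha^2$, and the value of $g_{\mathrm{Sch}}(\nabla\tau,\partial_t)$) have been computed in Section \ref{s-prelim}. The only small point requiring attention is the sign: one must remember that $P_\tau < 0$ in the relevant range of $s$, so that $\alpha = -s^{-1}P_\tau L^{-1/2}$ is positive, and the overall sign in $g_{\mathrm{Sch}}(T,\wt T)$ comes out negative (as it must, both $T$ and $\wt T$ being future directed timelike).
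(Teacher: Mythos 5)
Your proof is correct and follows the same route as the paper: expand $\gsch(T,\wt T)$ using $T=-\alpha\nabla\tau$ and $\wt T=h^{-1/2}\p_v$, then substitute the lapse formula and the identity $\gsch(\nabla\tau,\p_t)=-P_\tau^{-1}$ from \eqref{e-tau-t}. Your sign-tracking is in fact more careful than the paper's own writeup, which misquotes \eqref{e-tau-t} as $\gsch(\nabla\tau,\p_t)=P_\tau^{-1}$ (dropping the minus sign) but nevertheless states the correct final answer.
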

\begin{proof} By \eqref{e-lapse}, the lapse of $\tau$ is $\a=-s^{-1}P_\tau L^{-\frac12}$. By \eqref{e-tau-t},
$$
\gsch(\nabla \tau,\frac{\p}{\p t})= P_\tau^{-1}.
$$
Hence

\bee
\gsch(T,\wt T)=-\a \gsch(\nabla \tau ,  h^{-\frac12}\p_t)= -s^{-1}L^{-\frac12} h^{-\frac12}.
\eee
\end{proof}

Consider a  surface given by the graph of $u(\bx)$, where $\mathbf{x}=(x^1,x^2,x^3)$, namely, it is given by $t=u(\mathbf{x})$. Then  it is the level surface of $F(t,\mathbf{x})=t-u(\bx)=0$. Normal is given by

\bee
\nabla F=-h^{-1}\frac{\p}{\p t} -D^iu\frac{\p }{\p x^i}.
\eee
\bee
\gsch(\nabla F,\nabla F)=g^{ab}F_aF_b=-h^{-1}+|Du|^2.
\eee
Hence the surface is spacelike if and only if
\be\label{e-spacelike}
1-h|Du|^2>0.
\ee
If $u$ is spacelike, the
future pointing unit normal is:
\bee
\wt \bn=\lf(h^{-1}-|Du|^2\ri)^{-\frac12}\nabla F=\lf(h^{-1}-|Du|^2\ri)^{-\frac12}\lf(h^{-1}\frac{\p}{\p t}+D^iu\frac{\p }{\p x^i}\ri).
\eee
The tilt factor with respect to $\wt T$ is given by:
\be\label{e-tilt}
\wt \nu=-\gsch(\wt T, \wt \bn)=h^{-\frac12}\lf(h^{-1}-|Du|^2\ri)^{-\frac12}=(1-h|Du|^2)^{-\frac12}.
\ee

Suppose the surface is spacelike, it is more easy to appeal  to \cite[p.160]{Bartnik1984} to obtain the mean curvature equation of $u$. Namely,   its graph has   mean curvature $H$ if and only if:

\bee
3H=\Div\lf(\frac{ U}{(1-|U|^2)^\frac12}\ri)+3\wt \nu H^o+\wt\nu \gsch(U,\nabla_{\wt T}\wt T)+\frac12\wt \nu^3\wt T(|U|^2).
\eee
Here $\Div$ is the divergence with respect to the metric $(g_{ij})$,  and
$
U=\wt\a Du.
$
$|U|$ is the norm with respect to $g$ so that $|U|^2=\wt\a^{2}|Du|^2=h|Du|^2$.
$H^o$ is the mean curvature of $t$=constant, which is zero. Note that $\wt T(|U|^2)=0$, because  $|U|^2$ does not depend on $t$.
$$
\gsch(U,\nabla_{\wt T}\wt T)=-\frac12D^iu D_i\log h.
$$
Therefore, the graph of $u$ has mean curvature $H$ if and only if
\be\label{e-mean curvature}
\Div\lf(\frac{h^\frac12 Du }{(1-h|Du|^2)^\frac12}\ri)  -\frac12(1-h|Du|^2)^{-\frac12}D^iu D_i\log h=3H.
\ee

Hence the mean curvature equation is of the form:

\be\label{e-meancurvature-1}
A^{ij}u_{ij}+B(x, Du)=3h^{-\frac12}(1-h|Du|^2)^\frac12 H.
\ee
where
\bee
\left\{
  \begin{array}{ll}
   A^{ij}=(1-h|Du|^2)g^{lj}+hD^i uD^ju,\\
B(x,Du)=h^{-\frac12}  g(Du,     D(h^\frac12) ) + \frac12\frac{ |Du|^2g(Du, D h)}{(1-h|Du|^2) }.
  \end{array}
\right.
\eee
Here $u_{ij}$ is the Hessian of $u$ with respect to $g$.
\begin{lma}\label{l-elliptic}
Assume the graph of $u$ is spacelike, then any $\mathbf{a}=(a_1, a_2, a_3)$, we have
 \bee
 |\mathbf{a}|^2\ge A^{lj}a_la_j\ge (1-h|Du|^2)|\mathbf{a}|^2,
 \eee
where $a^j=g^{ij}a_i$ and $|\mathbf{a}|^2=a_ia^i$.
\end{lma}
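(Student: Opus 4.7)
The plan is to expand $A^{lj}a_la_j$ explicitly and then to read off the two bounds, using Cauchy--Schwarz for the metric $g$ on one side and nonnegativity on the other.

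First I would compute, using only the definition of $A^{ij}$,
\[
A^{lj}a_l a_j \;=\; (1-h|Du|^2)\,g^{lj}a_la_j \;+\; h\,(D^lu\, a_l)(D^ju\, a_j) \;=\; (1-h|Du|^2)\,|\mathbf{a}|^2 \;+\; h\,(D^lu\, a_l)^2,
\]
where I have used $|\mathbf{a}|^2 = a_i a^i = g^{ij}a_ia_j$. The lower bound then follows immediately: since the graph of $u$ is spacelike, the factor $1-h|Du|^2$ is nonnegative by \eqref{e-spacelike}, and the second term $h(D^lu\,a_l)^2$ is manifestly $\ge 0$ because $h>0$ on $r>2m$.

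For the upper bound I would invoke the Cauchy--Schwarz inequality for the positive definite metric $g$ applied to the vectors dual to $Du$ and $\mathbf{a}$. Concretely, writing $D^lu\, a_l = g^{lk}u_k a_l$, one has
\[
(D^lu\, a_l)^2 \;=\; (g^{lk}u_k a_l)^2 \;\le\; (g^{ij}u_iu_j)(g^{lk}a_la_k) \;=\; |Du|^2\,|\mathbf{a}|^2.
\]
Multiplying by $h$ and substituting back into the expansion gives
\[
A^{lj}a_l a_j \;\le\; (1-h|Du|^2)\,|\mathbf{a}|^2 + h|Du|^2\,|\mathbf{a}|^2 \;=\; |\mathbf{a}|^2,
\]
which is the desired upper bound.

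There is no real obstacle here: the only non-trivial ingredient is Cauchy--Schwarz for $g$, and the only place the spacelike hypothesis enters is to guarantee $1-h|Du|^2\ge 0$ so that the lower bound is genuinely a nonnegative multiple of $|\mathbf{a}|^2$. Once those two observations are in place, the estimate is a one-line expansion. The lemma is thus stated in the form that makes it immediately usable for verifying uniform ellipticity of the mean curvature operator \eqref{e-meancurvature-1} along any family of spacelike graphs with a uniform bound on $h|Du|^2$.
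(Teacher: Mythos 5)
Your proof is correct and follows essentially the same route as the paper: expand $A^{lj}a_la_j=(1-h|Du|^2)|\mathbf{a}|^2+h(D^lu\,a_l)^2$, then drop the nonnegative second term for the lower bound and apply Cauchy--Schwarz in $g$ for the upper bound. The paper stops at the expansion and says ``from this the lemma follows''; you have simply spelled out the two final steps.
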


 \begin{proof}
\bee
\begin{split}
A^{lj}a_la_j=&(1-h|Du|^2)|\mathbf{a}|^2+hg^{ij}g^{kl}u_iu_k a_l a_j\\
=&(1-h|Du|^2)|\mathbf{a}|^2+h(\sum_i u_ia^i )^2.\\
\end{split}
\eee
From this the lemma follows.
\end{proof}

Recall the following basic fact, see \cite[Lemma 3.3]{Bartnik1984}:
\begin{lma}\label{l-T123} In a Lorentzian vector space with inner product $\la\, ,\,\ra$,
Let $T_1, T_2, T_3$ be future-directed unit timelike vectors. Then
$$
1\le-\la T_1,T_2\ra\le 2\la T_1, T_3\ra \la T_2, T_3\ra.
$$
\end{lma}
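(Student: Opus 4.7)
The plan is to reduce the lemma to the standard (positive-definite) Cauchy--Schwarz inequality by decomposing $T_1$ and $T_2$ relative to $T_3$. Since $T_3$ is a future-directed unit timelike vector, its orthogonal complement $T_3^\perp$ is a spacelike (positive-definite) subspace. Write
$$T_i = a_i T_3 + X_i, \qquad X_i \in T_3^\perp, \quad i=1,2,$$
so that $a_i = -\la T_i, T_3\ra$. Because $T_i$ and $T_3$ are both future-directed we have $a_i>0$, and the unit constraints $\la T_i, T_i\ra = -1$ together with $\la T_3, T_3\ra = -1$ yield $|X_i|^2 = a_i^2 - 1 \ge 0$; in particular $a_i \ge 1$.

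Next I would expand
$$-\la T_1, T_2\ra = a_1 a_2 - \la X_1, X_2\ra,$$
and apply the ordinary Cauchy--Schwarz inequality on the positive-definite subspace $T_3^\perp$, namely $|\la X_1, X_2\ra| \le |X_1|\,|X_2| = \sqrt{(a_1^2-1)(a_2^2-1)}$, to obtain the two-sided bound
$$a_1 a_2 - \sqrt{(a_1^2-1)(a_2^2-1)} \;\le\; -\la T_1, T_2\ra \;\le\; a_1 a_2 + \sqrt{(a_1^2-1)(a_2^2-1)}.$$

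The left inequality $1 \le -\la T_1, T_2\ra$ then follows from the lower bound combined with $(a_1 a_2 - 1)^2 \ge (a_1^2-1)(a_2^2-1)$, which is just $(a_1 - a_2)^2 \ge 0$. For the right inequality I would verify $\sqrt{(a_1^2-1)(a_2^2-1)} \le a_1 a_2$, equivalently $a_1^2 + a_2^2 \ge 1$, which is immediate from $a_i \ge 1$; together with the identity $a_1 a_2 = \la T_1, T_3\ra \la T_2, T_3\ra$ this gives $-\la T_1, T_2\ra \le 2\la T_1, T_3\ra \la T_2, T_3\ra$. The only conceptual point is the positivity of $T_3^\perp$, which is standard for a timelike vector in Lorentzian signature; everything else is elementary algebra, so I do not anticipate a serious obstacle.
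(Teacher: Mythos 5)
Your proof is correct. The paper itself gives no proof of this lemma, merely citing Bartnik \cite[Lemma~3.3]{Bartnik1984}, so there is nothing in the text to compare against; but your argument --- decomposing $T_i = a_i T_3 + X_i$ with $a_i = -\la T_i, T_3\ra \ge 1$, $X_i$ spacelike in $T_3^\perp$, applying the Euclidean Cauchy--Schwarz inequality on $T_3^\perp$, and then reducing to the elementary inequalities $(a_1-a_2)^2 \ge 0$ and $a_1^2+a_2^2 \ge 1$ --- is the standard way to establish this fact and is, to the best of my recollection, essentially what Bartnik does. All the steps check out, including the sign bookkeeping $a_1 a_2 = \la T_1, T_3\ra\la T_2, T_3\ra$ and the observation that $a_1 a_2 - 1 \ge 0$ is needed before taking square roots in the lower bound.
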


We are now ready to prove Theorem \ref{t-main}:
\begin{proof} For simplicity, we prove the case that $H_0=1$. The other case is similar. Consider the foliation by $P(\by, s, \tau)$ at the beginning of the section with
  $M=\mS\times(0,s_0)\times (\frac12, 2)$. We assume that $s_0$ is chosen such that
  in the retarded null coordinate $v=t-r_*$, $\Phi(\by, s,\tau)=(\by,s,  -P)$ is a diffeomorphism between $M$ and $\Phi(M)$. Note that in terms standard coordinates as in \eqref{e-Sch-metric},
  \vskip .1cm
\be\label{e-Phi-M}
\begin{split}
 \Phi(M)=  \{(\by,r,t)|\ \ \by\in \mS^2, r>\frac1{s_0}, r_*-P(\by,r^{-1}, 2)>t>r_*-P(\by, r^{-1}, \frac12)\}.
\end{split}
\ee
Choose $\b_1<0, \b_2>0$ as in Lemma \ref{l-barrier} and let
  $w_1(\by, r)=r_*-Q_1(\by, r^{-1})$, $w_2(\by, r)=r_*-Q_2(\by, r^{-1} )$ where $Q_1$, $Q_2$ are as in the lemma. Here $s_0$ is chosen so that the conclusion of the lemma is true and so that the conclusion of Theorem \ref{t-est} is also true for $\tau_1=\frac12, \tau_2=2$. Also, let $w(\by,r)=r_*-P(\by, r^{-1},1)$.

Let $\frac12r_0=\frac1{s_0}$. For any $R>r_0$, consider the spacetime $N_R$ given by $\mS^2\times (\frac12 r_0, 2R)\times \R$ with metric induced by the Schwarzschild metric. One can see that the surface given by $t=w(\by, r)$  is spacelike and acausal in $N_R$, see \cite[Corollary 46 ]{ONeill}. That is: no  two different points on the surface are causally related. By \cite[Theorem 5.1]{Gerhardt1983}, we can find smooth function $u_R$ of $\by, r$, with $r_0\le r\le R$ so that the graph of $u_R$ is spacelike
with constant mean curvature 1, so that $u_R$ has the same boundary value as $w(\by, r)$.
Since $\b_1<0, \b_2>0$, we have $Q_1(\by, s)<P(\by, s,1)<Q_2(\by, s)$. We have $w_1>w>w_2$. Moreover, the mean curvature of the graph of $w_1$ is less than 1, and the mean curvature of $w_2$ is larger than 1. By the form of  \eqref{e-meancurvature-1} and the fact that the graphs of $w_1, w_2, u_R$  are all spacelike up to the boundary,  by \eqref{e-spacelike} and Lemma \ref{l-elliptic},  one can apply the comparison principle \cite[Theorem 10.1]{GilbardTrudinger} to conclude that
\be\label{e-barrier}
w_1(\by, r)\ge u_R(\by, r)\ge w_2(\by, r).
\ee
Hence the   graph  of $u_R$ is in $\Phi(M)$. In the $(\by, s,\tau)$ coordinates, this graph is given by $(\by, s, \tau(\by, s))$ with $\by \in \mS^2$, $\frac1R<s<\frac1{r_0}$ and $$u_R(\by, r)=r_*-P(\by, s, \tau(\by, s)), $$ with $r=s^{-1}$. On the boundary $s=\frac1R, \frac1{r_0}$, we have $\tau(\by, s)=1$.

The next step is to prove that $u_R$ will subconverge to a solution of \eqref{e-meancurvature-1} with $H=1$ as $R\to\infty$. In order to do this, by Lemma \ref{l-elliptic} we need to estimate the tilt factor of the surface with respect to the time function $t$. So let $\bn_R$ be the future pointing unit normal of the surface and let
$\nu_R=-\gsch(\bn_R, T)$ be the tilt factor with respect to the time function $\tau$ where $T$ is given by \eqref{e-T}. By Theorem \ref{t-est} and the fact that the Schwarzschild spacetime is Ricci flat, we can apply the Bartnik's gradient estimate   \cite[Theorem 3.1(iii)]{Bartnik1984} to conclude that
\be\label{e-nu-R-1}
\nu_R\le C_1
\ee
for some constant $C_1$ independent of $R$. Let $\wt T$ be the future pointed unit normal of the surface $t$=constant, by Lemma \ref{l-T123} and \eqref{e-tau-t}, we have
\be\label{e-nu-R}
\wt\nu_R=:-\gsch(\bn_R,\wt T)\le 2\gsch(\bn_R,T)\gsch(\wt T,T)\le C_2s^{-1}=C_2r
\ee
for some constant $C_2$ independent of $R$. Here we use the fact that the time function $\tau$ on restricted on the graph is bounded between $\frac34$ and $\frac 23$. Hence by \eqref{e-tilt}, for any open set $\Omega$ with compact closure in the region $r>r_0$, there is a constant $C_3$ independent of $R$ such that
$$
(1-h|Du_R|^2)^{-\frac 12}\le C_3.
$$
In particular, $|Du_R|\le C_4$ in $\Omega$ for some constant $C_4$ independent of $R$. By \eqref{e-meancurvature-1} with $H=1$, we may apply \cite[Theorem 13.6]{GilbardTrudinger} to obtain a uniform H\"older estimate for $u_i$. Using Schauder estimates, we conclude that there is a subsequence $R_k\to \infty$ such that $u_{R_k}$ converge in $C^\infty_{\mathrm{loc}}$ in $\{r>r_0\}$ to function $u$ so that its graph is spacelike and has constant mean curvature 1. By \eqref{e-barrier}, for any $R>r_0$, we have

\bee
 w_2(\by, r)\le u(\by, r)\le w_1(\by, r).
\eee
where $s=r^{-1}$. Hence
\bee
|u(\by,r)-r_*-P(\by,r^{-1},1)|\le \max\{-\b_1, \b_2\}r^{-3}.
\eee
 This completes the proof of the theorem.
\end{proof}
As a corollary of the proof, in particular by \eqref{e-nu-R-1}, we have:
\begin{cor}\label{c-tilt}
Let $u$ be the solution in Theorem \ref{t-main}. The tilt factor of the graph of $u$ with respect to the time function $\tau$ is uniformly bounded by a constant.
\end{cor}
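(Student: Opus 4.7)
The plan is to obtain this corollary directly from the construction in the proof of Theorem \ref{t-main}, by passing the uniform bound \eqref{e-nu-R-1} to the limit.

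First I would recall the setup of the proof of Theorem \ref{t-main}: the function $u$ is produced as a $C^\infty_{\text{loc}}$ limit of a subsequence $u_{R_k}$ on the region $\{r>r_0\}$, where each $u_{R_k}$ solves the CMC equation with $H_0=1$ on $\{r_0<r<R_k\}$ and has graph contained in $\Phi(M)$ by the barrier inequality \eqref{e-barrier}. In particular, each approximating graph is spacelike with future pointing unit normal $\bn_{R_k}$, and the relevant time function $\tau$ (with past-directed gradient and future-pointing unit normal $T$ given by \eqref{e-T}) is defined on $\Phi(M)$.

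The key input is Bartnik's gradient estimate \cite[Theorem 3.1(iii)]{Bartnik1984}, which has already been invoked during the proof of Theorem \ref{t-main} to establish
\[
\nu_{R_k}=-\gsch(\bn_{R_k},T)\le C_1,
\]
with $C_1$ independent of $R_k$. The applicability of this estimate to the family $\{u_{R_k}\}$ is guaranteed by Theorem \ref{t-est}, whose quantities $\alpha,\alpha^{-1},\|\nabla\alpha\|_\Theta,\|\nabla T\|_\Theta,\|\nabla\nabla T\|_\Theta,\|\vec H_{\tau,s}\|_\Theta$ are all uniformly bounded on $\mS^2\times(0,s_0)\times(\tau_1,\tau_2)$, together with the Ricci-flatness of $\gsch$. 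So I would simply reference this bound rather than rederive it.

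The final step is to pass to the limit. Since $u_{R_k}\to u$ in $C^\infty_{\text{loc}}$ on $\{r>r_0\}$, the induced first fundamental forms and the unit normals $\bn_{R_k}$ converge locally smoothly to those of the limit graph of $u$. Because the tilt factor is a pointwise expression in $\bn$ and $T$ (and $T$ is fixed), we obtain
\[
-\gsch(\bn,T)=\lim_{k\to\infty}\bigl(-\gsch(\bn_{R_k},T)\bigr)\le C_1
\]
at every point of the graph of $u$, which is exactly the claim. There is no serious obstacle here; the whole content of the corollary is to observe that the bound \eqref{e-nu-R-1} is preserved under the local smooth limit, so I would present this as a brief remark rather than a separate lengthy argument.
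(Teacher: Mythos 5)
Your proposal is correct and follows exactly the route the paper itself indicates: the uniform bound \eqref{e-nu-R-1} on $\nu_{R_k}$ (from Bartnik's gradient estimate via Theorem \ref{t-est}) is preserved under the $C^\infty_{\mathrm{loc}}$ convergence of $u_{R_k}$ to $u$, since the unit normals converge pointwise and $T$ is fixed. Nothing more is needed; this matches the paper's own brief derivation.
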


We should remark that Lemma \ref{l-barrier} also implies the following:
\begin{cor}\label{c-decay}
Let $f$ be a smooth function on $\mS^2$. Suppose $u$ is a function defined on $r>r_0$ such that the graph of $u$ in the Schwarzschild spacetime is spacelike with constant mean curvature $H_0>0$ so that $u(r,\by)-r_*\to f(\by)$ are $r\to\infty$. Suppose there is $C>0$ such that
$$
 u(\by, r_i)-r_*-\lf(f(\by)+r_i^{-1}\phi(\by)+\frac12 r_i^{-2}\psi(\by)\ri)-Cr_i^{-3} \le0.
$$
for some $r_i\to\infty$,
where $\phi, \psi$ are as in Theorem \ref{t-main}. Then we have
 $$
\limsup_{r\to\infty}\lf(u(\by, r)-r_*-\lf(f(\by)+r^{-1}\phi(\by)+\frac12 r^{-2}\psi(\by)\ri)-C'r^{-3}\ri)\le0.
$$
for some $C'>0$. Similar result is true for the lower bound estimate.

\end{cor}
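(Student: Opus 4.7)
The proof is a barrier argument: use Lemma \ref{l-barrier} to manufacture an explicit upper barrier of the desired cubic form, and then run the comparison principle for the mean curvature equation \eqref{e-meancurvature-1}, exactly as in the final step of the proof of Theorem \ref{t-main}.

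First I would extend Lemma \ref{l-barrier} to the given constant $H_0>0$ (the proof goes through verbatim modulo scaling) to obtain $B=B(f,H_0)>0$ and $r_0>2m$ such that for every $\beta\le-B$ the function
$$
w_\beta(\by,r):=r_*+f(\by)+r^{-1}\phi(\by)+\tfrac12 r^{-2}\psi(\by)-\beta\,r^{-3}
$$
defines a spacelike graph of mean curvature strictly less than $H_0$ on $\{r>r_0\}$, and is therefore a strict supersolution of $\mathcal{M}[v]=H_0$. Setting $C':=\max(C,B)$ and $\widehat u:=w_{-C'}$, and writing $\widehat u_C:=\widehat u-(C'-C)r^{-3}$ for the function appearing in the hypothesis, we get
$$
u(\by,r_i)-\widehat u(\by,r_i)=\big(u(\by,r_i)-\widehat u_C(\by,r_i)\big)+(C-C')r_i^{-3}\le 0
$$
for all $\by\in\mS^2$, once an $r_i$ in the given sequence is chosen with $r_i>r_0$.

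Next I would compare $u$ with $\widehat u$ on the compact annulus $\Omega_R:=\mS^2\times[r_i,R]$ for each $R>r_i$. Since $\p/\p t$ is a Killing field of the Schwarzschild metric, adding a constant to a graph leaves its mean curvature unchanged, so for any $\gamma\ge 0$ the function $\widehat u+\gamma$ is still a strict supersolution of $\mathcal{M}[v]=H_0$. Taking
$$
\gamma_R:=\max\!\Big(0,\ \sup_{\by\in\mS^2}\big(u(\by,R)-\widehat u(\by,R)\big)\Big),
$$
we have $u\le\widehat u+\gamma_R$ on both boundary components of $\Omega_R$; the spacelike condition, \eqref{e-meancurvature-1}, and Lemma \ref{l-elliptic} place us in the quasilinear elliptic setting, and the comparison principle \cite[Theorem 10.1]{GilbardTrudinger} used in the proof of Theorem \ref{t-main} gives $u\le\widehat u+\gamma_R$ throughout $\Omega_R$. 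Letting $R\to\infty$ and using $\gamma_R\to 0$ (discussed below) yields $u(\by,r)\le\widehat u(\by,r)$ for all $\by\in\mS^2$ and $r\ge r_i$, which is even stronger than the stated $\limsup$ bound. The lower-bound half is symmetric, using $w_\beta$ with $\beta=\beta_2>0$ coming from the other half of Lemma \ref{l-barrier}.

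The main obstacle is showing $\gamma_R\to0$. Since $\widehat u-r_*-f=O(r^{-1})$ uniformly on $\mS^2$, this reduces to upgrading the hypothesis $u-r_*\to f$ from pointwise to uniform convergence on $\mS^2$. I expect to argue this from interior gradient/Schauder estimates for \eqref{e-meancurvature-1}: the spacelike condition $h|Du|^2<1$ together with interior regularity of solutions of \eqref{e-meancurvature-1} gives equicontinuity (on the compact sphere) of the family $\{u(\cdot,R)-r_*(R)\}_R$, and an Arzel\`a--Ascoli argument then promotes pointwise to uniform convergence. This regularity step is the only nontrivial piece; everything else is a direct transcription of the comparison argument already executed in \S\ref{s-cmc surfaces}.
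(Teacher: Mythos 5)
Your overall plan (manufacture an explicit cubic barrier via Lemma \ref{l-barrier} and run the comparison principle as in the proof of Theorem \ref{t-main}) is the paper's approach, and the construction of $\widehat u$ and the verification that $u\le\widehat u$ on the inner boundary are fine. The problem is the outer boundary.

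You compare on $\Omega_R=\mS^2\times[r_i,R]$ for a generic $R$ and then need $\gamma_R\to 0$, which you reduce to uniform convergence of $u(\cdot,R)-r_*$ to $f$. That reduction is not supported by what you cite. The hypothesis only asserts pointwise convergence $u(\by,r)-r_*\to f(\by)$, and the spacelike condition alone does \emph{not} give equicontinuity of $\by\mapsto u(\by,R)-r_*(R)$: from $h|Du|^2<1$ one only gets $|\wt\nabla u|_\sigma< r/\sqrt h$, a bound that grows linearly in $r$. (In the proof of Theorem \ref{t-main} the stronger control came from the tilt-factor estimate \eqref{e-nu-R-1}, which used Bartnik's gradient estimate on the \emph{constructed} surfaces; here $u$ is an arbitrary spacelike CMC graph and that bound is not available for free.) So as written there is a genuine gap in the $\gamma_R\to0$ step.

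The gap is unnecessary: the hypothesis gives the cubic bound at \emph{every} $r_i$ of the sequence and \emph{every} $\by\in\mS^2$. So instead of a generic $R$, compare on the annulus $\mS^2\times[r_i,r_j]$ with both $r_i<r_j$ taken from the sequence. Then $u\le\widehat u$ on both boundary spheres (the inner one as you computed; the outer one by the same computation with $r_j$ in place of $r_i$), and \cite[Theorem 10.1]{GilbardTrudinger} gives $u\le\widehat u$ on all of $\mS^2\times[r_i,r_j]$. Letting $j\to\infty$ yields $u\le\widehat u$ on $\mS^2\times[r_i,\infty)$, and the $\limsup$ bound follows with $C'=-\b_1$. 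No uniform-convergence step, no Arzel\`a--Ascoli. (Also a small bookkeeping point: one needs $C'$ large enough both so that $-\b_1=C'>C$ and so that $2\b_1+c<0$ as in \eqref{e-Hss}; your $C'=\max(C,B)$ is meant to cover this, but it is worth making the two requirements explicit.) The lower bound is symmetric with $\b_2>0$.
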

\begin{proof} In the proof of Lemma \ref{l-barrier}, we may choose $\b_1>0$ large enough so that $\b_1+c<0$ in the notation in the proof and so that $\b_1>C$. Then by the maximum principle, it is easy to see that the corollary is true.

\end{proof}

\section{Lipschitzian regularity}\label{s-Lip}

We want to prove that the solution $u$ given by Theorem \ref{t-main} is Lipschitz near infinity in the sense that the function $r_*-u$ is Lipschitz up to $s=0$ in the coordinates $\by\in \mS^2, s$ and $v=t-r_*.$ In fact, more general result can be obtained. Here is the setup.
Let $\by, s, v$ be as in \S\ref{s-prelim} before. Consider the metric given by
\be\label{e-asy-Sch-1}
G=\omega^{-2}\ol G
\ee
where $\ol G=\ol g+p$ and $\omega=s(1+c(s^3))$. Here $\ol g$ is the unphysical metric \eqref{e-unphysical-1} and $p=p_{ab}dy^ady^b$ with $p_{ab}=p_{ab}(s^3)$ in the coordinates $\by=(y^1,y^2), y^3=s, y^4=v$. Here $p_{ab}(s^3)$ means that $p_{ab}=s^3\Lambda_{ab}$ where $\Lambda_{ab}$ is a   smooth function  on $\mS^2\times[0,s_0)\times\R$ for some $s_0>0$. Similar definition for $c(s^3)$. Hence for fixed $v_1<v_2$,  on $\mS^2\times(0,s_0)\times(v_1,v_2)$ we have $\ol G^{ab}=\ol g^{ab}+p^{ab}$, with $p^{ab}=p^{ab}(s^3)$, provided $s_0$ is small enough.

Let $f$ be a smooth function on $\mS^2$. Suppose $P(\by,s,\tau)$, $\tau>0$ is such that
\be\label{e-P}
P(\by, s, \tau)=f(\by)-\frac12\lf(\tau^2+|\wt\nabla f|^2\ri)s+s^2c(\by, s,\tau)
\ee
where $c$ is smooth function on $\mS^2\times[0,s_0)\times (0,\infty)$. As before, one can see that for fixed $0<\tau_1<\tau_2$, $(\by, s, \tau)\to (\by, s, v)$ with $v=-P(\by,s,\tau)$ is a diffeomorphism from $M=:\mS^2\times(0,s_0)\times (\tau_1,\tau_2)$ on to its image $\mathcal{N}$, provided $s_0$ is small enough. Its image is:
\bee
\mathcal{N}=\{(\by, s, v)|\ \by\in \mS^2, s\in (0,s_0), P(\by,s, \tau_1)<v< P(\by, s, \tau_2)\}.
\eee
Moreover, in terms of the metric $G$, $\nabla \tau$ is timelike. Here $\nabla$ is the derivative with respect to $G$. Let $T=-\a\nabla \tau$ as before, where $\a^{-2}=-G(\nabla\tau,\nabla\tau)$. We have the following:
\begin{thm}\label{t-Lip}
Suppose $\Sigma$ is a spacelike surface inside $\mathcal{N}$ for some $0<\tau_1<\tau_2$, which is   given by $v+Q(\by,s)=0$, $(\by,s)\in\mS^2\times(0,s_0)$. Suppose the tilt factor of $\Sigma$ with respect to $T$ is bounded, that is suppose $-G(T,\bn)\le C$ on $\Sigma$ for some $C>0$ where $\bn$ is the future pointing unit normal of $\Sigma$. Then $Q$ is uniformly Lipschitz on $\mS^2\times(0, s_1)$ for some $0<s_1<s_0$.
\end{thm}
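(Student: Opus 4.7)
The plan is to reparametrize $\Sigma$ as a graph over the $\tau$-foliation, translate the Lipschitz bound on $Q$ into a bound on rescaled derivatives of the new height function, and extract the latter from an exact algebraic identity implied by the tilt-factor hypothesis. Since $\Phi:(\by,s,\tau)\mapsto (\by,s,-P(\by,s,\tau))$ is a diffeomorphism of $M$ onto $\mathcal{N}$ and $\Sigma\subset\mathcal{N}$, the equation $v+Q(\by,s)=0$ can be rewritten uniquely as $\tau=T_0(\by,s)$, with $T_0$ determined by $P(\by,s,T_0)=Q$. Differentiating gives $Q_A=P_A+P_\tau(T_0)_A$ and $Q_s=P_s+P_\tau(T_0)_s$. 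Because $P_A,P_s$ are smooth up to $s=0$ and $P_\tau=-\tau s+O(s^2)$ by \eqref{e-P}, the theorem follows once the rescaled derivatives $\tilde a_A:=s(T_0)_A$ and $\tilde a_s:=s(T_0)_s$ are shown to be uniformly bounded for small $s$.

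Next I would compute the tilt factor in the $(\by,s,\tau)$ chart. Using the Jacobian entries $\partial\tau/\partial y^A=-P_A/P_\tau$, $\partial\tau/\partial s=-P_s/P_\tau$, $\partial\tau/\partial v=-1/P_\tau$ together with $G^{ab}=\omega^{-2}\bar G^{ab}$, the form \eqref{e-unphysical-2} of $\bar g^{ab}$, $\omega=s(1+c(s^3))$, and $p^{ab}=O(s^3)$, a routine expansion yields $\tilde G^{\tau\tau}=-\alpha^{-2}$ with $\alpha=1+O(s)$; $\tilde G^{\tau A}=(s/\tau)\sigma^{AB}f_B+O(s^2)$ and $\tilde G^{\tau s}=s/\tau+O(s^2)$; and $\tilde G^{AB}=s^2\sigma^{AB}+O(s^5)$, $\tilde G^{As}=O(s^5)$, $\tilde G^{ss}=s^4+O(s^5)$. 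In particular the spatial block $\tilde G^{ij}$ ($i,j\in\{A,s\}$) is positive definite for $s$ small. Writing $F:=\tau-T_0$, the future-pointing unit normal is $\bn=-\nabla F/\sqrt{-G(\nabla F,\nabla F)}$, and setting $X:=\tilde G^{\tau i}(T_0)_i$, $H:=\tilde G^{ij}(T_0)_i(T_0)_j$, a direct calculation gives
\[
\nu^2=\frac{(\alpha^{-1}+\alpha X)^2}{\alpha^{-2}+2X-H},
\]
whose denominator equals $-G(\nabla F,\nabla F)>0$ by spacelikeness. Cross-multiplying and rearranging yields the exact identity
\[
\nu^2 H+\Bigl[\alpha X-\tfrac{\nu^2-1}{\alpha}\Bigr]^2=\tfrac{\nu^2(\nu^2-1)}{\alpha^2}.
\]

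Both summands on the left are non-negative ($H\ge 0$ by positive-definiteness of $\tilde G^{ij}$; $\nu\ge 1$ by the reverse Cauchy--Schwarz inequality for future-directed unit timelike vectors), so the hypothesis $\nu\le C$ forces the pointwise bounds $H\le (C^2-1)/\alpha^2$ and $|\alpha X-(\nu^2-1)/\alpha|\le C\sqrt{C^2-1}/\alpha$. Hence $H$ and $X$ are bounded by constants depending only on $C,f,\tau_1,\tau_2$. Substituting the asymptotics of the previous paragraph,
\[
H=|\tilde a_A|^2_\sigma+s^2\tilde a_s^2+O(s^3)(|\tilde a_A|^2+\tilde a_s^2),
\]
so $H$ bounded and $s$ small give $|\tilde a_A|$ and $s|\tilde a_s|$ bounded. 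Likewise
\[
X=\tau^{-1}(\sigma^{AB}f_B\tilde a_A+\tilde a_s)+O(s)(|\tilde a_A|+|\tilde a_s|),
\]
so boundedness of $X$, combined with $|\tilde a_A|$ bounded and absorption of the $O(s)|\tilde a_s|$ term for $s$ small, yields $|\tilde a_s|$ bounded as well. Plugging $|\tilde a_A|,|\tilde a_s|\le C'$ back into the Step-1 identities produces $|Q_A|,|Q_s|\le C''$ uniformly on $\mS^2\times(0,s_1)$, which is the desired uniform Lipschitz property.

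The conceptual backbone---the identity and its immediate clean consequence for $H$ and $X$---is short; the main technical burden is the metric expansion and the bookkeeping of the $O(s^3)$ corrections from $p^{ab}$ and from the Taylor remainder of $P$. A subtle point, easy to miss at first, is that the bound on $H$ alone controls only $|\tilde a_A|$ and $s|\tilde a_s|$; the separate bound on $X$ is essential to upgrade $s|\tilde a_s|=O(1)$ to $|\tilde a_s|=O(1)$ and thereby close the argument.
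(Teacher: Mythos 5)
Your proposal is correct, and it takes a genuinely different route from the paper's. The paper works directly with the defining function $Q(\by,s)$ in the $(\by,s,v)$ chart together with the $(t,x^i)$ chart: it first shows $-G(\nabla F,\nabla F)\le Cs^2$ by bounding the tilt against $\wt T=-\wt\a\nabla t$ (via Lemma~\ref{l-T123} and Lemmas~\ref{l-G-metric}, \ref{l-TT}), then extracts one inequality from the $T$-tilt hypothesis (estimate \eqref{e-Q-1}, roughly $-Q_s-\e|\wn Q|^2\le C$) and a second from spacelikeness ($G(\nabla F,\nabla F)\le 0$, giving \eqref{e-Q-2}), and finally squeezes $Q_s$ and $|\wn Q|$ out by taking a suitable linear combination $\text{\eqref{e-Q-1}}+\delta\cdot\text{\eqref{e-Q-2}}$ with $\delta=2\e$, $\e=1/8$. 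You instead reparametrize $\Sigma$ as $\tau=T_0(\by,s)$, reduce boundedness of $Q_A,Q_s$ to boundedness of $\tilde a_i=s(T_0)_i$ (using $P_\tau=-\tau s+O(s^2)$), expand the inverse metric entirely in the $(\by,s,\tau)$ chart, and package both the tilt hypothesis and spacelikeness into the single algebraic identity
$$\nu^2 H+\Bigl[\alpha X-\tfrac{\nu^2-1}{\alpha}\Bigr]^2=\tfrac{\nu^2(\nu^2-1)}{\alpha^2},$$
whose two nonnegative summands (nonnegativity of $H$ is exactly the positive-definiteness of $\tilde G^{ij}$, i.e. the spacelikeness of the foliation) give the bounds on $H$ and $X$ directly. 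I verified the identity and the asymptotics $\tilde G^{AB}=s^2\sigma^{AB}+O(s^5)$, $\tilde G^{As}=O(s^5)$, $\tilde G^{ss}=s^4+O(s^5)$, $\tilde G^{\tau A}=(s/\tau)\sigma^{AB}f_B+O(s^2)$, $\tilde G^{\tau s}=s/\tau+O(s^2)$; they all check out, and the two-stage extraction (first $|\tilde a_A|$ and $s|\tilde a_s|$ from $H$, then $\tilde a_s$ from $X$ by absorption) is sound. What your approach buys: it avoids the detour through the $(t,x^i)$ coordinates, Lemma~\ref{l-G-metric}, and the auxiliary $\wt T$-tilt estimate entirely, and the completed-square identity makes the balancing of the two constraints transparent rather than hidden in a choice of $\delta,\e$. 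What the paper's approach buys: it works directly with $Q$ as-is without the intermediate reparametrization, which matches how $u$ is actually produced in the construction.
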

By Corollary \ref{c-tilt} and the proof of Theorem \ref{t-main} we have:
\begin{cor}\label{c-Lip}
Let $u$ be the solution in Theorem \ref{t-main}. Let $Q(\by,s)=r_*-u(\by, r)$ with $s=r^{-1}$. Then $Q(\by, s)$ is uniformly Lipschitz in $\mS^2\times(0,s_0)$ for some $s_0>0$.
\end{cor}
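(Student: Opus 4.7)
Corollary \ref{c-Lip} is an immediate specialization of Theorem \ref{t-Lip}: take $G=\gsch$, $\omega=s$, $p\equiv 0$, $P$ as in \S\ref{s-cmc surfaces}, and $\Sigma$ the graph of $u$. The hypothesis that $\Sigma$ is spacelike and contained in $\mathcal{N}$ comes from Theorem \ref{t-main} and the barrier inequality \eqref{e-barrier}; the tilt bound is Corollary \ref{c-tilt}. The substantive content is thus Theorem \ref{t-Lip}, whose proof I now outline.

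The plan is to convert the bounded-tilt hypothesis into a pointwise algebraic inequality in the first derivatives of $Q$ and then analyze it as a quadratic in $Q_s$ to extract uniform bounds. The key leverage comes from the conformal invariance $-G(T,\bn)=-\bar G(\bar T,\bar\bn)$ under $G=\omega^{-2}\bar G$, which lets me work in $\bar G$. For $\Sigma=\{F=0\}$ with $F=v+Q$, the identity $\bar G(\bar\nabla F,\partial_v)=F_v=1>0$ together with $\partial_v$ being future-timelike forces $\bar\nabla F$ to be past-timelike, so $\bar\bn=-L^{-1/2}\bar\nabla F$ with $L=-\bar G(\bar\nabla F,\bar\nabla F)>0$; similarly $\bar T=-\alpha\omega\bar\nabla\tau$ since $\alpha^{-2}=\omega^2\bar L_\tau$ with $\bar L_\tau:=-\bar G(\bar\nabla\tau,\bar\nabla\tau)$. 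Using the chain-rule identities $\tau_v=-1/P_\tau$, $\tau_s=-P_s/P_\tau$, $\tau_A=-P_A/P_\tau$ from $v+P=0$, combined with the explicit form of $\bar g^{ab}$ and the $O(s^3)$ perturbation $p^{ab}$, one gets
\[
\bar G(\bar\nabla\tau,\bar\nabla F)=-\frac{M}{P_\tau}+O(s^2)(1+|Q_s|+|\wn Q|),\qquad M:=Q_s+P_s+s^2(1-2ms)P_sQ_s+\la\wn P,\wn Q\ra.
\]
Inserting $P_\tau=-\tau s(1+O(s))$ from \eqref{e-P} and $\bar L_\tau=s^{-2}(1+O(s))$ and squaring the bound $-\bar G(\bar T,\bar\bn)\le C$ yields $M^2\le(1+\varepsilon)C^2\tau^2 L$ on $\mS^2\times(0,s_1)$ for any $\varepsilon>0$ and $s_1$ small enough.

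Write $M=aQ_s+b$ with $a=1+s^2(1-2ms)P_s$ and $b=P_s+\la\wn P,\wn Q\ra$, and expand $L=-2Q_s-s^2(1-2ms)Q_s^2-|\wn Q|^2+O(s^3)$. The above inequality becomes $AQ_s^2+BQ_s+C_0\le 0$ with leading coefficient $A$ uniformly positive. For any real $Q_s$ to satisfy this, the discriminant $B^2-4AC_0$ must be nonnegative; simplifying this condition to leading order reduces it to
\[
|\wn Q|^2\le 2\la\wn P,\wn Q\ra+2P_s+(1+\varepsilon)C^2\tau^2+O(s).
\]
Young's inequality applied to the cross term converts this into a uniform bound on $|\wn Q|$ in terms of $f$, $\tau_1$, $\tau_2$, and $C$. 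Once $|\wn Q|$ is controlled, $A$, $B$, and $C_0$ are all uniformly bounded, so confinement of $Q_s$ between the two roots of the quadratic produces a uniform bound on $|Q_s|$. Together these yield the uniform Lipschitz property of $Q$.

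The chief technical issue is the circular dependence on $|Q_s|$ and $|\wn Q|$ in the $O(s^3)$ errors from $p^{ab}$ and in the subleading parts of the tilt formula. This is handled by a bootstrap: local smoothness of $\Sigma$ away from $s=0$ gives a local $L^\infty$ starting point for $(Q_s,\wn Q)$, and then $s_1$ is shrunk until every error term in these quantities can be absorbed into a fraction of the leading ones, making the derived inequality self-consistent.
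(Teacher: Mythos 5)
Your reduction of Corollary~\ref{c-Lip} to Theorem~\ref{t-Lip} via Corollary~\ref{c-tilt} and the barrier inequality~\eqref{e-barrier} is exactly the paper's (one-line) proof of the corollary, so that part is identical. Your sketch of Theorem~\ref{t-Lip}, however, takes a genuinely different route. The paper derives two separate first-order inequalities: \eqref{e-Q-1}, roughly $-Q_s-\e|\wn Q|^2\le C$, obtained from the tilt bound only after first establishing $-G(\nabla F,\nabla F)\le C s^2$ via Lemmas~\ref{l-G-metric}, \ref{l-TT} and a second application of Lemma~\ref{l-T123} in the $(t,x^i)$ coordinates; and \eqref{e-Q-2}, roughly $2Q_s+|\wn Q|^2\le C$, from the spacelike condition. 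These are then combined with the weights $1$ and $\delta=2\e=\frac14$. You instead square the single tilt bound into $M^2\le(1+\e)C^2\tau^2 L$, a quadratic inequality in $Q_s$ with uniformly positive leading coefficient; nonnegativity of its discriminant (plus Young) bounds $|\wn Q|$, and confinement between the two real roots then bounds $Q_s$. This is arguably cleaner and avoids the intermediate $L$-estimate altogether, with the spacelike condition entering only through $L>0$, which is already forced by the existence of $\bn$. One caveat: the closing ``bootstrap'' is both unnecessary and, as phrased, does not quite close (the local $L^\infty$ bound from interior smoothness is not uniform as $s\to 0$, so shrinking $s_1$ gives no a priori starting bound). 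It is also not needed: the $O(s^2)$ and $O(s^3)$ error terms coming from $p^{ab}$ and from $\omega/s-1$ are at most quadratic in $(Q_s,\wn Q)$ with coefficients tending to zero, so—exactly as the paper absorbs errors in \eqref{e-Q-2}—they are absorbed pointwise into the leading quadratic form once $s_1$ is small enough, and for the Corollary itself $p\equiv 0$ and $\omega=s$, so these errors vanish entirely.
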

\begin{proof} Let $u_R$ be as in the proof of Theorem \ref{t-main}. Since $u_R$ converges to $u$ in $C^\infty_{loc}$, by \eqref{e-nu-R-1}, one can conclude that $Q$ satisfies the conditions in the theorem. Hence the corollary is true.

\end{proof}
\begin{rem}\label{r-AO} It seems likely that Theorem \ref{t-Lip} can also be applied to the spacelike CMC surface constructed by Andersson and Iriondo in \cite[Theorem 4.2]{AnderssonIriondo1999}.

\end{rem}

Before we prove Theorem \ref{t-Lip}, we need to obtain some estimates.
Consider the coordinates $t, x^1, x^2, x^3$ with $t=v+r_*$, $r=s^{-1}$ and $\by, r$ are the spherical coordinates of $\R^3$. In the following, we always assume that $\tau_1<\tau<\tau_2$. Hence we are doing estimates in $M$ or $\mathcal{N}$.
\begin{lma}\label{l-G-metric}
$\frac{\p}{\p t}$ is timelike with respect to $G$ provided $s_0$ is small enough. Moreover, if $G_{ij}=G(\frac{\p}{\p x^i}, \frac{\p}{\p x^j})$ is the induced metric on $t$=constant, and if $\wt \a$ is the lapse and $\b^i$ is the shift vector, then $G_{ij}=\delta_{ij}+O(s)$, $\wt\a=1+O(s)$, $\b_i=O(s)$. Here $\b_i=G_{ij}\b^j$.
\end{lma}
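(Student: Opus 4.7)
The plan is to push the metric $G$ from the null coordinates $(\by,s,v)$ into the ``spacelike'' coordinates $(t,x^1,x^2,x^3)$ and read off the claimed expansion term by term. Concretely, with $r=s^{-1}$ and $v=t-r_*$, the change of frames is $\p_t|_{r,\by}=\p_v$, $\p_r|_{t,\by}=-s^2\p_s-h^{-1}\p_v$ (since $dr_*/dr=h^{-1}$), and $\p_{y^A}|_{t,r}=\p_{y^A}|_{s,v}$. Applying this to the unphysical Schwarzschild metric \eqref{e-unphysical-1} and using the crucial identity $-s^2 h\cdot h^{-2}+2\cdot 1\cdot s^2 h^{-1}\cdot\frac{1}{2}=s^2 h^{-1}$, together with $\ol g(\p_v,\p_s)=1$ canceling the $\ol g(\p_v,\p_v)$ contribution in the cross terms, I obtain the diagonal form
\[
\ol g \;=\; \sigma_{AB}\,dy^A dy^B \;-\; s^2 h\, dt^2 \;+\; s^2 h^{-1}\, dr^2
\]
in $(\by,r,t)$ coordinates (all off-diagonal components vanishing). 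This is the familiar statement that $v=t-r_*$ diagonalises the unphysical metric.

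Next I would analyse the perturbation $p$. Since each $p_{ab}=p_{ab}(s^3)$ in the $(\by,s,v)$ chart and the Jacobian factors appearing in the above change of frame ($-s^2$ and $h^{-1}$) are bounded up to $s=0$, every component of $p$ in $(\by,r,t)$ is again $O(s^3)$. Multiplying by $\omega^{-2}=s^{-2}(1+c(s^3))$ then gives, in $(\by,r,t)$ coordinates,
\[
G_{tt}=-1+O(s),\quad G_{rr}=1+O(s),\quad G_{AB}=s^{-2}\sigma_{AB}+O(s),\quad G_{tr},\,G_{tA},\,G_{rA}=O(s).
\]
In particular $G(\p_t,\p_t)<0$ for $s_0$ small, proving the first assertion.

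To finish, I convert to Cartesian coordinates via $\p_{x^i}|_t=\hat r_i\p_r+(\p y^A/\p x^i)\p_{y^A}$, where $\hat r_i=x^i/r$ and $|\p y^A/\p x^i|=O(s)$. Using the Euclidean identity $r^2\sigma_{AB}dy^A dy^B=(\delta_{ij}-\hat r_i\hat r_j)dx^i dx^j$ to handle the (seemingly singular) angular block $s^{-2}\sigma_{AB}$, and noting that the $O(s)$ angular and mixed terms get further suppressed by two or one factors of $\p y^A/\p x^i=O(s)$, the spatial metric combines into
\[
G_{ij}dx^i dx^j \;=\; \hat r_i\hat r_j\, dx^i dx^j + (\delta_{ij}-\hat r_i\hat r_j)dx^i dx^j + O(s)\,dx^i dx^j \;=\; \delta_{ij}\,dx^i dx^j + O(s).
\]
For the shift I compute $\b_i=G(\p_t,\p_{x^i})=\hat r_i\,G_{tr}+(\p y^A/\p x^i)G_{tA}$; the first term is $O(s)$ and the second is $O(s)\cdot O(s)=O(s^2)$, so $\b_i=O(s)$. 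Finally, from $\wt\a^2=-G_{tt}+\b_i\b^i=1+O(s)$ I read off $\wt\a=1+O(s)$.

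The only mildly delicate step is the third paragraph: the factor $s^{-2}$ in $G_{AB}$ looks singular at $\mathcal{I}^+$, and the main obstacle is confirming that passing from angular coordinates to Cartesian coordinates exactly absorbs this factor (via $r^2\sigma=\sum (dx^i)^2-dr^2$) so that $G_{ij}$ remains bounded and of the form $\delta_{ij}+O(s)$. Once the Jacobian scalings $\p y^A/\p x^i=O(s)$ are tracked carefully, everything collapses to the stated bookkeeping of $O(s)$ corrections.
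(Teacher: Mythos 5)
Your proposal is correct and follows essentially the same computational route as the paper: pass from $(\by,s,v)$ to $(t,x^i)$ via $\p_t=\p_v$, $\p_r=-s^2\p_s-h^{-1}\p_v$, use that $\omega^{-2}\ol g=(1+O(s^3))\gsch$ while $\omega^{-2}p=O(s)$, and read off the $O(s)$ bookkeeping. The only organizational difference is that you explicitly rediagonalize $\ol g$ in $(\by,r,t)$ (thereby rediscovering $\ol g=s^2\gsch$) and extract $\wt\a$ from the ADM identity $\wt\a^2=-G_{tt}+\b_i\b^i$, whereas the paper invokes $\ol G=s^2\gsch+p$ directly and obtains $\wt\a$ by computing $G(\nabla t,\nabla t)$ in the $(\by,s,v)$ chart; both are correct and of the same difficulty.
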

\begin{proof}
$\frac{\p}{\p t}=\p_v$. Hence by the assumption on $p$,
\bee
G(\frac{\p}{\p t},\frac{\p}{\p t})=\omega^{-2}(\ol g_{vv}+p_{vv} )<0
\eee
if  $s_0>0$ is small enough.  Since $t=v+r_*$, $\p_v(t)=1, \p_s t=-s^{-2}h^{-1}, \p_{y^A}t=0$, for $A=1, 2$, where $h=1-2ms$ as before. Let $\p_a=\p_{y^a}$. Here $y^1, y^2$ are local coordinates of $\mS^2$, $y^3=s, y^4=v$.
So
\bee
\nabla t=G^{ab}\p_at \p_b=\lf(G^{vb}-s^{-2}h^{-1}G^{sb} \ri)\p_b.
\eee
\bee
\begin{split}
G(\nabla t,\nabla t)=&G^{ab}\p_at\p_bt\\
=&G^{vv}-2s^{-2}G^{vs}h^{-1}+G^{ss}s^{-4}h^{-2}\\
=&\omega^2(\ol G^{vv}-2s^{-2}\ol G^{vs}h^{-1}+\ol G^{ss}s^{-4}h^{-2})\\
=&\omega^2\lf[(\ol g^{vv}+p^{vv})-2s^{-2}(\ol g^{vs}+p^{vs})h^{-1}+(\ol g^{ss}+p^{ss})s^{-4}h^{-2}\ri]\\
=&-1+O(s).
\end{split}
\eee
Hence
\bee
\wt \a=1+O(s).
\eee

Comparing with $\gsch$, we see that in the coordinates $t, x^i$,
\bee
G=s^2\omega^{-2}\gsch+\omega^{-2}p.
\eee
On the other hand,
\bee
\frac{\p}{\p t}=\p_v,
\eee
and
\bee
\frac{\p }{\p x^i}=\frac{\p r}{\p x^i}\frac{\p}{\p r}+\frac{\p y^A}{\p x^i}\frac{\p}{\p_{y^A}}
= \frac{x^i}r\lf(-h^{-1}\p_v-s^2\p_s\ri)+\frac{\p y^A}{\p x^i} \p_{y^A}.
\eee
 Here we have used:
\bee
\frac{\p}{\p r}=\frac{\p v}{\p r}\p_v+\frac{\p s}{\p r}\p_s=-h^{-1}\p_v-s^{2}\p_s.
\eee
Hence
\bee
\begin{split}
\b_i=&G(\frac{\p}{\p t}, \frac{\p }{\p x^i})\\
=&\omega^{-2}p(\frac{\p}{\p t}, \frac{\p }{\p x^i})\\
=&\omega^{-2}p(\p_v,\frac{x^i}r\lf(-h^{-1}\p_v-s^2\p_s\ri)\p_s+\frac{\p y^A}{\p x^i} \p_{y^A})\\
=&\omega^{-2}\lf( \frac{x^i}r h^{-1}p_{vv}- \frac{x^i}r s^2p_{vs}+\frac{\p y^A}{\p x^i}p_{vA}\ri)\\
=&O(s).
\end{split}
\eee
Here we have used the fact that $\frac{\p y^A}{\p x^i}=O(s)$.
\bee
\begin{split}
G_{ij}=& {s^2} \omega^{-2} \gsch(\frac{\p}{\p x^i},\frac{\p}{\p x^j})+\omega^{-2}p(\frac{\p}{\p x^i},\frac{\p}{\p x^j})\\
=&\delta_{ij}+O(s)\\
&+\omega^{-2}p\lf(\frac{x^i}r\lf(-h^{-1}\p_v-s^2\p_s\ri)\p_s+\frac{\p y^A}{\p x^i}\p_{y^A}, \frac{x^j}r\lf(-h^{-1}\p_v-s^2\p_s\ri)\p_s+\frac{\p y^B}{\p x^j}\p_{y^B}\ri)\\
=&\delta_{ij}+O(s).
\end{split}
\eee
This completes the proof of the lemma.

\end{proof}

As before, we consider $\frac{\p}{\p t}$ as future pointing.
\begin{lma}\label{l-TT}
$\nabla\tau$ is future pointing. Let $\a$ be the lapse of the time function $\tau$, we have
$$
\a=1+O(s).
$$ Moreover if  $\wt T=-\wt \a \nabla t$, then
$$
-G(T,\wt T)=(\tau s)^{-1}+O(1).
$$
\end{lma}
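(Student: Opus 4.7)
\textbf{Proof proposal for Lemma \ref{l-TT}.}

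The plan is to repeat the Schwarzschild-case computations from Lemma \ref{l-foliation-1}, keeping careful track of how the perturbation $p$ and the conformal factor $\omega$ enter, and verifying that all $p$-contributions are lower-order. First, since $v=-P(\by,s,\tau)$ defines $\tau$ implicitly as a function of $(y^1,y^2,s,v)$, the same chain-rule argument as in Lemma \ref{l-parametriztaion} gives
\[
\tau_v=-P_\tau^{-1},\quad \tau_s=-P_s/P_\tau,\quad \tau_A=-P_A/P_\tau,\qquad A=1,2.
\]
From \eqref{e-P} one has $P_\tau=-\tau s+O(s^2)$, $P_s=-\tfrac12(\tau^2+|\wt\nabla f|^2)+O(s)$ and $P_A=f_A+O(s)$, so each $\partial_a\tau$ is of order $s^{-1}$.

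Next I would compute the lapse. Since $G^{ab}=\omega^2(\bar g^{ab}+p^{ab})$ with $p^{ab}=O(s^3)$, I split
\[
G(\nabla\tau,\nabla\tau)=\omega^2\bar g^{ab}\tau_a\tau_b+\omega^2 p^{ab}\tau_a\tau_b.
\]
The first term is exactly the computation in Lemma \ref{l-foliation-1}: using the block form of $(\bar g^{ab})$ in \eqref{e-unphysical-2},
\[
\bar g^{ab}\tau_a\tau_b=-P_\tau^{-2}L,\qquad L=-(2P_s+s^2(1-2ms)P_s^2+|\wt\nabla P|^2),
\]
and by \eqref{e-P} one gets $L=\tau^2+O(s)$ as in \eqref{e-L-1}, hence $-P_\tau^{-2}L=-s^{-2}(1+O(s))$. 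For the perturbation, $p^{ab}\tau_a\tau_b=O(s^3)\cdot O(s^{-2})=O(s)$. Combined with $\omega^2=s^2(1+O(s^3))$, this yields $G(\nabla\tau,\nabla\tau)=-1+O(s)$, so $\alpha^2=1+O(s)$ and $\alpha=1+O(s)$. Since $G(\nabla\tau,\partial_v)=\partial_v\tau=-P_\tau^{-1}>0$ and $\partial_v=\partial/\partial t$ is future-pointing (which one checks is still timelike for $G$ exactly as in Lemma \ref{l-G-metric}), $\nabla\tau$ is a past-pointing timelike vector, so $T=-\alpha\nabla\tau$ is future-pointing as required.

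For the final inner product, write $\wt T=-\wt\alpha\nabla t$ so that $G(T,\wt T)=\alpha\wt\alpha\,G(\nabla\tau,\nabla t)$. Using $t=v+r_*(s)$, $\partial_v t=1$, $\partial_s t=-s^{-2}h^{-1}$, $\partial_{y^A}t=0$, and the same block decomposition,
\[
\bar g^{ab}\tau_a t_b=\bar g^{ss}\tau_s t_s+\bar g^{sv}(\tau_s t_v+\tau_v t_s)+\bar g^{vv}\tau_v t_v=\frac{s^{-2}}{hP_\tau}=-\tau^{-1}s^{-3}(1+O(s)),
\]
after the miraculous cancellation $\bar g^{ss}\tau_s t_s+\bar g^{sv}\tau_s t_v=P_s/P_\tau-P_s/P_\tau=0$. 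The perturbation term $p^{ab}\tau_a t_b$ is of size $O(s^3)\cdot O(s^{-1})\cdot O(s^{-2})=O(1)$, hence subleading. Multiplying by $\omega^2=s^2(1+O(s^3))$ gives $G(\nabla\tau,\nabla t)=-(\tau s)^{-1}+O(1)$, and combining with $\alpha\wt\alpha=1+O(s)$ (the latter from Lemma \ref{l-G-metric}) yields $-G(T,\wt T)=(\tau s)^{-1}+O(1)$.

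The main technical obstacle is the bookkeeping in the lapse and cross-term computation: one must verify that every place where $p$ enters contributes strictly lower order, which depends crucially on the hypothesis $p_{ab}=p_{ab}(s^3)$ (and correspondingly $p^{ab}=p^{ab}(s^3)$ on the range of $\tau$ in question). The rest is a direct translation of the Schwarzschild computations of Lemma \ref{l-foliation-1}, together with the cancellation $\bar g^{ss}\tau_s t_s+\bar g^{sv}\tau_s t_v=0$ in the key cross-product step.
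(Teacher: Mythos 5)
Your proof is correct and follows essentially the same route as the paper: write $G^{ab}=\omega^2(\bar g^{ab}+p^{ab})$, use the chain-rule expressions for $\tau_a$ together with the block form of $(\bar g^{ab})$ to extract the leading Schwarzschild-type contribution, and check that every $p$-term is of strictly lower order, then assemble $G(T,\wt T)=\a\wt\a\,G(\nabla\tau,\nabla t)$. One small point worth noting: like the paper's own proof (which states ``Hence $\nabla\tau$ is past directed''), you correctly conclude that $\nabla\tau$ is \emph{past}-pointing so that $T=-\a\nabla\tau$ is future-pointing; the phrase ``$\nabla\tau$ is future pointing'' in the lemma statement is evidently a slip.
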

\begin{proof} In the coordinates $y^1, y^2, y^3=s, y^4=v$,
\bee
\nabla\tau=G^{ab}\p_{y^a}\tau \p_{y^b}.
\eee
As in Lemma \ref{l-parametriztaion},
\bee
\begin{split}
G(\frac{\p}{\p t},\nabla \tau)=-\frac1{P_\tau}>0,
\end{split}
\eee
provided $s$ small enough.
Hence $\nabla\tau$ is past directed. As in the proof of Lemma \ref{l-parametriztaion}
\bee
\begin{split}
G(\nabla\tau,\nabla\tau)=& G^{ab} \p_{y^a}\tau  \p_{y^b}\tau \\
=&\omega^2(\ol g^{ab}\p_{y^a}\tau  \p_{y^b}\tau+p^{ab}\p_{y^a}\tau  \p_{y^b})\\
=&\omega^2( P_\tau^{-2}(2P_s+s^2h P_s^2+|\wt\nabla P|^2+O(s^{-2}))\\
=& s^2P_\tau^{-2}(2P_s+s^2h P_s^2+|\wt\nabla P|^2)+O(s).
\end{split}
\eee
Hence $\a=1+O(s)$. Next,
\bee
\begin{split}
G(T,\wt T)=&\a\wt\a G(\nabla \tau,\nabla T)\\
=&\a\wt\a \omega^2 (\ol g^{ab}+p^{ab})\p_{y^a}\tau \p_{y^b}t\\
=&\a\wt\a \omega^2(s^{-2}P_\tau^{-1}+ O(1))\\
=&-(\tau s)^{-1}+O(1).
\end{split}
\eee
\end{proof}

\begin{proof}[Proof of Theorem \ref{t-Lip}]  Let $F(\by, s,v)=v+Q(\by, s)$. Then the surface $\Sigma$ given by $F=0$ is spacelike. First, let us  prove that $-G(\nabla F,\nabla F)$ is bounded
on $\Sigma$. For $\tau_1<\tau<\tau_2$, by \eqref{e-dtau}  we have
\be\label{e-dP-2}
\p_v\tau=(\tau s+O(s^2))^{-1}, \p_s\tau=(\tau s+O(s^2))^{-1}P_s, \p_{y^A}\tau=(\tau s+O(s^2))^{-1}P_A,
\ee
$A=1, 2$.
Moreover, $P_s, P_A$ are all bounded. We will work on a coordinate neighborhood $U$ of $\mS^2$, so that the standard metric $\sigma_{AB}$ is bounded from above and the eigenvalues of $(\sigma_{AB})$ is bounded from below by a positive constant on $U$.

Let $T, \wt T$ be as in Lemma \ref{l-TT}. Then by Lemma \ref{l-T123} and the assumption on $-G(T,\bn)$,
\bee
-G(T,\wt T)\le 2G(T,\bn)G(\wt T,\bn)\le -C_1G(\wt T,\bn)
\eee
for some $C_1>0$.
By Lemma \ref{l-TT}, we have
\be\label{e-Tn}
-G(\wt T,\bn)\ge C_2s^{-1}
\ee
for some $C_2>0$. Here and below, we implicitly assume that $0<s<s_0$ with $s_0$ is small enough.

In the $t, x^i$ coordinates, $F=t-r_*+Q=:t-u$. We have
\bee
-G(\nabla F,\nabla F)=\wt\a^{-2}\lf((1+\b_iu^i)^2-\wt\a^2u^iu_i\ri)>0.
\eee
where $u_i=\frac{\p u}{\p x^i}$ and $u^i=G^{ij}u_j$. From this inequality, by Lemma \ref{l-G-metric}, we conclude that $u^iu_i$ is uniformly bounded on $\Sigma$ and hence $\frac32\ge 1+\b_iu^i\ge \frac12>0$ provided $s$ is small enough. We can write
\bee
-G(\nabla F,\nabla F)=\wt\a^{-2}(1+\b^iu_i)^2(1-|U|^2),
\eee
where
$$
U=\frac{\wt\a Du}{1+\b_iu^i}
$$
and $Du=u^i\frac{\p}{\p x^i}$. On the other hand,
\bee
-G(\wt T,\bn)=(1-|U|^2)^{-\frac12}.
\eee
By \eqref{e-Tn} we have
\bee
\begin{split}
1-|U|^2\le C_3 s^2
\end{split}
\eee
for some $C_3>0$.
We conclude that by Lemma \ref{l-G-metric},
\be\label{e-GF}
-G(\nabla F,\nabla F)\le C_4 s^2.
\ee
for some $C_4>0$, because $ 1+\b_iu^i$ is bounded.   By the assumption on the tilt factor with respect to the time function $\tau$, we have
\bee
\begin{split}
C_5\ge& -G(T,\bn)\\
=&-\a (-G(\nabla F,\nabla F))^{-\frac12}G(\nabla\tau,\nabla F)\\
=&-\a (-G(\nabla F,\nabla F))^{-\frac12} \omega^2\lf(\ol G^{ab}\p_{y^a}\tau \p_{y^b}F\ri)\\
=&-\a(-G(\nabla F,\nabla F))^{-\frac12} \omega^2\lf((\ol g^{ab}+p^{ab})\p_{y^a}\tau \p_{y^b}F\ri)\\
=&-\a (-G(\nabla F,\nabla F))^{-\frac12} \omega^2\times\\
&(\ol g^{vv}\p_v \tau\p_v F+\ol g^{vs}(\p_s \tau\p_v F+\p_v\tau\p_sF)+\ol g^{ss}\p_s\tau\p_s F+\ol g^{AB}\p_{y^A}\tau\p_{y^B}F+ q^{b}\p_{y^b}F)\\
=&-\a (-G(\nabla F,\nabla F))^{-\frac12} \omega^2\times\\
& \bigg[\frac1{P_\tau}\lf(-(P_s+ Q_s) -s^2(1-2ms) P_sQ_s- \la\wt\nabla P,\wt\nabla Q\ra\ri) + q^{b}\p_{y^b}F\bigg]
\end{split}
\eee
where $q^{b}=O(s^2) $ and we have used \eqref{e-dP-2}. By \eqref{e-dP-2} and \eqref{e-GF} we conclude that
\bee
-Q_s-s^2(1-2ms)P_sQ_s-\la\wt\nabla P,\wt\nabla Q\ra+q^{b}\p_{y^b}F\le C_6
\eee
for some  constant $C_6>0$. Since $P_s, P_A$ are uniformly  bounded and $\sigma^{AB}Q_AQ_B\ge C\sum_{A=1}^2 Q_A^2$ for some $C>0$, we have for any $\e>0$, we have
\be\label{e-Q-1}
-(1+O(s^2))Q_s-(\e+O(s^2))|\wt\nabla Q|^2\le C_7(\e)
\ee
for some constant $C_7$ which also depends on $\e$.

Since $\Sigma$ is spacelike, we have
$$
G(\nabla F,\nabla F)\le 0.
$$
Computations similar to the above, we have
\bee
2Q_s+s^2(1-2ms)Q_s^2+|\wt\nabla Q|^2+O(s^3)\lf(1+Q_s^2+|\wt\nabla Q|^2\ri) \le 0.
\eee
This implies that $Q_s\le Cs^3$ and
  \be\label{e-Q-2}
  (2+O(s^3))Q_s+(s^2(1-2ms)+O(s^3))Q_s^2+(1+O(s^3))|\wt\nabla Q|^2\le C_8
  \ee
for some $C_8>0$. Multiply \eqref{e-Q-2} by $\delta>0$ and add it to \eqref{e-Q-1}, if $s>0$ is small enough,  we have
\bee
-\lf[1+O(s^2)-\delta(2+O(s^3))\ri]Q_s+\lf[\delta(1+O(s^3))-(\e+O(s^2))\ri]|\wt\nabla Q|^2
\le C_7+\delta C_8
\eee
Let $\delta=2\e$ and $\e=\frac18$, we can conclude that
$$
-Q_s\le C_9
$$
for some $C_9>0$ provided $s$ is small enough. Hence $-C_9\le Q_s\le Cs^3$ if $s$ is small enough. From this and \eqref{e-Q-2}, we conclude that $|\wt\nabla Q|^2$ is uniformly bounded provided $s$ is small enough. This completes the proof of the theorem.
\end{proof}

\end{document}